\theoremstyle{plain}
\newtheorem{theorem}{Theorem}[section]
\newtheorem{defn}[theorem]{Definition}
\newtheorem{conj}[theorem]{Conjecture}
\newtheorem{prop}[theorem]{Proposition}
\newtheorem{lemma}[theorem]{Lemma}
\theoremstyle{remark}
\newtheorem{remark}[theorem]{Remark}
\newtheorem{example}[theorem]{Example}
\def\phi{\varphi}
\def\rig#1{\smash{ \mathop{\longrightarrow}
 \limits^{#1}}}
\def\F{{\mathcal F}}
\def\g{{\mathfrak g}}
\def\h{{\mathfrak h}}
\newcommand{\defi}[1]{\textsf{#1}} % for defined terms
\newcommand{\isom}{\cong}
\newcommand{\End}{\operatorname{End}}
\newcommand{\Gr}{\operatorname{Gr}}
\newcommand{\SL}{\operatorname{SL}}
\newcommand{\GL}{\operatorname{GL}}
\newcommand{\sgn}{\operatorname{sgn}}
\newcommand{\Stab}{\operatorname{Stab}}
\newcommand{\rank}{\operatorname{rank}}
\newcommand{\PP}{\mathbb{P}}
\newcommand{\CC}{\mathbb{C}}
\newcommand{\FF}{\mathbb{F}}
\newcommand{\edim}{\operatorname{exp.dim}}
\newcommand{\vdim}{\operatorname{v.dim}}
\def\bw#1{{\textstyle\bigwedge^{\hspace{-.2em}#1}}}
\newcommand{\ccirc}[1]{\xymatrix@1{*+<1ex>[o][F-]{#1}}}
\newcommand{\dalton }[1]{{\color{red} \sf  Dalton: [#1]}}
\newcommand{\luke}[1]{{\color{red} [\sf Luke: [#1]]}}
\author{Dalton Bidleman}
\author{Luke Oeding}\thanks{{\{deb0036,oeding\}}@auburn.edu}
\address{Department of Mathematics and Statistics,
Auburn University,
Auburn, AL, USA
}
\title{Restricted Secant Varieties of Grassmannians}
\date{\today}
\begin{document}

\maketitle

\begin{abstract}
Restricted secant varieties of Grassmannians are constructed from sums of points corresponding to $k$-planes with the restriction that their intersection has a prescribed dimension.  
We study dimensions of restricted secant of Grassmannians and relate them to the analogous question for secants of Grassmannians via an incidence variety construction. We define a notion of expected dimension and give a formula for the dimension of all restricted secant varieties of Grassmannians that holds if the BDdG conjecture \cite{BaurDraismadeGraaf}*{Conjecture 4.1} on non-defectivity of Grassmannians is true. We also demonstrate example calculations in Macaulay2, and point out ways to make these calculations more efficient. We also show a potential application to coding theory.
\end {abstract}

\section{Introduction}
Secant varieties are fundamental objects in algebraic geometry. 
Given a projective variety $X\subset \PP^n$, the $k$-secant variety is the closure of all points that have $X$-rank $\leq k$, i.e. those of the form $[v] = [x_1 + \cdots + x_k]$ with $[x_i] \in X$ for all $i$. Such decompositions have many applications since one can view an $X$-rank decomposition as recovering the information stored in the $[x_i]$ from $[v]$ \cite{KoldaBader}. Often it is not possible to choose the $x_i$ completely independently, and in this article we study one such class of examples called restricted secant varieties, expanding on an idea from Fulton and Harris \cite{FultonHarris}*{Ex.~15.44}, see (see Definition~\ref{def:rrest}).

For $X \subset \PP V$ invariant under the action of a subgroup $G \subset \GL(V)$  secant varieties of $X$ inherit this $G$-invariance.
Hence secant varieties can be part of a classification of orbits \cites{Vinberg-Elasvili,Antonyan, Antonyan-translation,AOP_Grassmann}. One seeks easy ways to compute invariants that permit the separation of orbits, perhaps the first of which is dimension.

Terracini's lemma \cite{Zak} reduces the dimension of the secant variety of a variety $X$ to a dimension count for a sum of linear spaces. This  count is usually correct (as long as the spaces don't intersect), so when it fails for the $k$-secant variety one says that $X$ is $k$-defective. Alexander and Hirshowitz \cite{AOP_Grassmann} settled the classification of defectivity for Veronese re-embeddings of projective space (for recent proofs see  \cites{Postinghel, OttavianiRubei}). Initial studies in the cases of general tensors can be found \cite{Geramita_Lectures}, which led to conjectures \cites{AOP_Grassmann, BaurDraismadeGraaf} and further progress has been made in verifying initial cases in \cites{abo2018most,CHIO}. 
The story is similar in the case of secants to Grassmannians, see  \cite{CGG6_Grassmann} and  \cite{BaurDraismadeGraaf}. 

We focus on restricted secants to Grassmannians. Our main result is the following: 

\begin{restatable}{theorem}{fiber}
\label{thm:fiber}
Let $\dim( V)=n$ and $r,s,\geq 0$ and $0 \leq k \leq n$. Then the restricted secant variety
$\sigma_s^r(\Gr(k,V))$ is birationally isomorphic to the fiber bundle, denoted $\Xi$, with base $\Gr(r,V)$ and whose fiber over a point $E$ is   $\sigma_s(\Gr(k-r, V/E))$.
\end{restatable}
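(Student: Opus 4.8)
The plan is to realize $\Xi$ as an incidence variety mapping onto $\sigma_s^r(\Gr(k,V))$, and then to prove this map birational by reconstructing the distinguished subspace $E$ from a general point.

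First I would record the pointwise linear algebra. For $E\in\Gr(r,V)$ pick a generator $\eta_E$ of the line $\bw{r}E\subseteq\bw{r}V$; wedging with $\eta_E$ defines a linear injection $\lambda_E\colon\bw{k-r}(V/E)\hookrightarrow\bw{k}V$, $\bar\omega\mapsto\eta_E\wedge\tilde\omega$, which does not depend on the chosen lift $\tilde\omega$ of $\bar\omega$ and, after projectivizing, does not depend on $\eta_E$. Its image is the linear subspace $\bw{r}E\wedge\bw{k-r}V=\{\xi\in\bw{k}V:v\wedge\xi=0\text{ for all }v\in E\}$, and $\lambda_E$ carries the Plücker cone of $\Gr(k-r,V/E)$ isomorphically onto the Plücker cone of $\{A\in\Gr(k,V):E\subseteq A\}$. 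Consequently $\lambda_E$ identifies $\sigma_s(\Gr(k-r,V/E))$ with a subvariety of $\PP(\bw{k}V)$, and that subvariety lies in $\sigma_s^r(\Gr(k,V))$: a point $\bigl[\sum_i\eta_E\wedge\tilde\omega_i\bigr]=\bigl[\sum_i[A_i]\bigr]$ with $E\subseteq A_i$ for all $i$ has $\dim\bigcap_iA_i\geq r$ and so meets the condition of \defref{def:rrest}.

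Next I would globalize over $\Gr(r,V)$. Let $\mathcal S\subset V\otimes\mathcal O_{\Gr(r,V)}$ be the tautological subbundle and $\mathcal Q=(V\otimes\mathcal O_{\Gr(r,V)})/\mathcal S$ the quotient, whose fibre over $E$ is $V/E$. Then $\bw{k-r}\mathcal Q$ is a vector bundle, and inside $\PP(\bw{k-r}\mathcal Q)$ the fibrewise secant of the fibrewise Grassmannian $\Gr(k-r,\mathcal Q)$ is precisely $\Xi$; it is an irreducible projective variety, a bundle over the irreducible base $\Gr(r,V)$ with irreducible fibres. The maps $\lambda_E$ glue, up to a twist by the line bundle $\bw{r}\mathcal S$ which is invisible after projectivizing, to a morphism $\phi\colon\Xi\to\PP(\bw{k}V)$ with $\phi(\Xi)\subseteq\sigma_s^r(\Gr(k,V))$. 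Conversely a point of $\sigma_s^r(\Gr(k,V))$ of the form $\bigl[\sum_i[A_i]\bigr]$ with $\dim\bigcap_iA_i\geq r$ is $\phi$ applied to $\bigl(E,\bigl[\sum_i[A_i/E]\bigr]\bigr)$ for any $r$-plane $E\subseteq\bigcap_iA_i$; since $\Xi$ is projective, $\phi(\Xi)$ is closed, so $\phi(\Xi)=\sigma_s^r(\Gr(k,V))$. Thus $\phi$ is surjective, and it remains to show $\phi$ is generically injective.

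For this I would recover $E$ from a general point $p\in\sigma_s^r(\Gr(k,V))$ via $E(p)=\{v\in V:v\wedge p=0\}$: if $p=\phi(E,p')$ with $p'$ on the secant cone of $\Gr(k-r,V/E)$ then $E\subseteq E(p)$ always, and a short computation identifies $E(p)/E$ with the analogous subspace $\{w\in V/E:w\wedge p'=0\}$ of $p'$. The fibre $\phi^{-1}(p)$ is then $\{E'\in\Gr(r,E(p)):\lambda_{E'}^{-1}(p)\text{ lies on the secant cone of }\Gr(k-r,V/E')\}$, so generic injectivity reduces to showing this set is the single point $E$ for general $p$ — a genericity statement about secant varieties of Grassmannians of exactly the type used to study $\sigma_s$ itself (in the cleanest situations it amounts to the general point of $\sigma_s(\Gr(k-r,V/E))$ being annihilated by no nonzero vector, which forces $E(p)=E$). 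I expect this to be the \emph{main obstacle}. If a direct argument is delicate, an alternative is a Terracini/tangent‑space count: show $\dim\Xi=\dim\sigma_s^r(\Gr(k,V))$ and exhibit one point of the image with a single reduced preimage, so that by semicontinuity a general fibre is a single point. Either way, once generic injectivity holds, $\phi$ and $p\mapsto\bigl(E(p),\lambda_{E(p)}^{-1}(p)\bigr)$ are mutually inverse birational maps, which is the assertion.
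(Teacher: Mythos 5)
Your proposal is correct and follows essentially the same route as the paper: the forward map is fiberwise wedging with $\widehat E$ over the bundle $\bw{k-r}\mathcal{Q}$, and the inverse recovers $E$ from a general point, your annihilator $E(p)=\{v\in V: v\wedge p=0\}$ being exactly the Hodge dual of the paper's recovery of $E$ as $\ker F_{\Omega_V(w)}$ via the contraction $\Omega_V(w)$ and its $1$-flattening. The step you flag as the main obstacle --- that $E(p)=E$ exactly for a general $p$, i.e.\ that a general point of the secant cone of $\Gr(k-r,V/E)$ is annihilated by no nonzero vector of $V/E$ --- is precisely the point the paper also passes over with ``by construction the composition is the identity,'' so your treatment is no less complete than the published one.
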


This gives a way to calculate the dimensions of  restricted secants of Grassmannians and determine the defective cases as well. Interest in the skew-symmetric case is partly driven by the connections to coding theory which are explored in   \cite{AOP_Grassmann}*{Sec.~3}. Continuing in this vein, in Section~\ref{sec:coding} we provide a possible application to coding theory for restricted secants.

In Section~\ref{sec:notation} we recall basic definitions and preliminary notions. In Section~\ref{sec:dim} we recall standard techniques for computing dimensions of parametrized varieties, we sketch our implementation in Macaulay2,  \cite{M2} and discuss improvements to computational efficiency for secants and restricted secants. In Sections~\ref{sec:1restricted} and \ref{sec:rrestricted} we respectively consider the 1-restricted and $r$-restricted secant varieties of Grassmannians and compute their dimensions. In Section~\ref{sec:dimr} we provide a fiber bundle construction that we use to study the dimensions of the restricted secant varieties. We also propose the complete description of dimensions for restricted secants of Grassmannians based on the BDdG conjecture \cite{BaurDraismadeGraaf}*{Conjecture 4.1}:

\begin{restatable}{cor}{classification}
\label{thm:classification}
If the BDdG conjecture is true, then $\sigma_s^{r}(\Gr(k,V))$ has no additional defect other than the defect coming from (usual) secant varieties of Grassmannians.
\end{restatable}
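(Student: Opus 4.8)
The plan is to deduce the corollary directly from Theorem~\ref{thm:fiber}.  By that theorem, $\sigma_s^r(\Gr(k,V))$ is birational to the fiber bundle $\Xi$ over $\Gr(r,V)$ whose fiber over $E$ is $\sigma_s(\Gr(k-r,V/E))$.  Since birational varieties have the same dimension, and since a fiber bundle has dimension equal to the dimension of the base plus the dimension of a (generic) fiber, we immediately get
\[
\dim \sigma_s^r(\Gr(k,V)) \;=\; \dim \Gr(r,n) \;+\; \dim \sigma_s(\Gr(k-r,\, n-r)).
\]
The first step, then, is to record this dimension formula carefully, including the edge cases $r=0$ (where it reduces to the ordinary secant statement) and $s=1$ (where $\sigma_1$ of anything is just the Grassmannian itself and there is no defect to speak of).

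The second step is to define precisely what ``expected dimension'' of $\sigma_s^r(\Gr(k,V))$ means — presumably $\min\{\, \dim \PP(\bw{k}V),\ \dim \Gr(r,n) + \edim \sigma_s(\Gr(k-r,n-r))\,\}$, where $\edim$ of an ordinary secant is the usual $\min\{s\cdot\dim\Gr + (s-1),\dim\PP(\bw{k-r}(n-r))\}$ — so that ``no additional defect'' has content.  One must check that the naive expected dimension of the restricted secant, computed intrinsically, agrees with $\dim\Gr(r,n)$ plus the expected dimension of the fiber secant; this is a small bookkeeping lemma comparing the two min-expressions, using that the embedding dimension only drops when the fiber secant fills its ambient space, in which case the restricted secant fills a correspondingly large space too.

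The third and final step is to invoke the BDdG conjecture.  It asserts that the only defective secant varieties $\sigma_s(\Gr(k',V'))$ are those on an explicit exceptional list; conditionally on it, $\dim\sigma_s(\Gr(k-r,n-r))$ equals its expected value except in those listed cases.  Plugging this into the formula from step one and comparing with the expected dimension from step two shows that $\sigma_s^r(\Gr(k,V))$ has the expected dimension precisely when the fiber secant $\sigma_s(\Gr(k-r,n-r))$ does — i.e. the only defects are inherited from genuine secant defects of Grassmannians, which is exactly the assertion.  I expect the only real friction to be the bookkeeping in step two: making sure the ``$\min$ with ambient dimension'' truncations on the two sides are compatible, and handling degenerate ranges of the parameters ($k-r<0$, $s$ larger than needed to fill the space, $r>k$, etc.) so that the statement is literally correct and not merely morally so.
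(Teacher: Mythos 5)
Your proposal follows exactly the paper's own route: the corollary is deduced from Theorem~\ref{thm:fiber} by noting that the birational fiber-bundle description forces $\dim \sigma_s^r(\Gr(k,V)) = \dim\Gr(r,n) + \dim\sigma_s(\Gr(k-r,n-r))$, so any defect must be inherited from the fiber $\sigma_s(\Gr(k-r,V/E))$, whose defective cases are exactly those classified by the BDdG conjecture. Your second step (reconciling the two $\min$-truncations in the expected-dimension comparison and the degenerate parameter ranges) is a level of bookkeeping the paper leaves implicit, but it is a refinement of, not a departure from, the same argument.
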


\section{Preliminaries and Notation}\label{sec:notation}
\subsection{Grassmannians}
Let $V,W$ denote complex finite-dimensional vector spaces. Given a projective variety $X \subset \PP V$ let $\widehat X$ denote the cone in $V$.
The Grassmann variety is the collection of $k$-dimensional subspaces of $V$ denoted  $\Gr(k,V)$, or $\Gr(k,n)$ if the ambient space $V$ is $n$-dimensional. The Pl\"ucker embedding maps $\Gr(k,V)$ into $\PP \bw{k} V$ as follows. Given a $k$-plane $E$, select a basis $e_{1},\ldots, e_{k}$ of $E$ and send it to the class of the wedge product $[e_1\wedge \cdots \wedge e_k]$. We will write  $\widehat E = e_1\wedge \cdots \wedge e_k $ for a representative on that line. One checks that this map is well-defined independent of the choice of basis of $E$, and that it is an embedding.

Since we work in projective space and have a skew-symmetric product we often insist that $i_{1}<i_{2}<\dots<i_{k}$. The general linear group acts transitively on the set of $k$-planes, hence the Grassmannian is also the $\GL(V)$-orbit
\[
\Gr(k,V) = \GL(V).[e_{1}\wedge \cdots \wedge e_{k}].
\]

Any nonzero element $\delta \in \bw{n}V$ induces an isomorphism $\bw{k}V \to \bw{n-k}V$ given by contraction on simple elements and extending through linearity. For instance, if $\delta = e_1\wedge \cdots \wedge e_n$, then $\delta (e_I) = \sgn(I, I^\star) e_{I^\star} $, where $\star$ denotes complement on multi-indices, and $\sgn(I, I^\star)$ denotes the sign of the corresponding permutation of $[n]$.  This induces a duality on Grassmannians
\begin{equation}\label{eq:adual}
\Gr(k,V) \isom \Gr(\dim(V)-k, V), \quad \text{or} \quad \Gr(k,n) \isom \Gr(n-k,n).
\end{equation}

For parametrized varieties the differential-geometric view of the tangent space is useful:
\begin{defn}\label{def:tgt}
Let  $x\in X$ be a smooth point on an algebraic variety $X \subset \PP W$. The cone over the tangent space to $X$ at $x$ is
\[\widehat{ T_{x}}X =\{\gamma'(0)\mid \gamma\colon \CC^1 \rig{} X , \gamma(0)=x\}.\]
\end{defn}

It is a standard exercise in
 \cite{LandsbergTensorBook}*{Ch.~6} for instance
 to verify the following expression:
 \[
 \widehat{T_E}\Gr(k,V) = E + E^*\otimes V/E,
 \]
 where $E^*$ is the dual vector space, and $V/E$ is the quotient. One finds other useful characterizations of this tangent space in \cite{CGG6_Grassmann}.
We prefer the following description. 
The tangent space to the Grassmannian at $E$ is spanned by $e_{1}\wedge \cdots \wedge e_{k}$ and all square-free monomials of the form
$e_{I\setminus \{i\} \cup \{j\}}$,
where $I = \{1,\dots,k\}$, $i \in I$ and $j \in \{k+1,\dots,n\}$. 
This description has an interpretation using simplices. Recall that the set of multi-indices of length $k$, denoted $\mathbb{S}_k = \{J \subset[n] \mid |J| = k\}$, parametrizes the space of $k$-simplices. There is a discrete distance function called the Hamming distance $d_H$ on $\mathbb{S}_k$, which is defined as $d_H(I,J)$ the size of the symmetric difference of $I$ and $J$. 

The indices that occur in the monomials in $\widehat{T_E}\Gr(k,V)$ correspond to all simplices in a Hamming ball of radius $1$ centered at the standard $k$-simplex, which one can show contains $k(n-k)+1$ simplices.

\subsection{ \texorpdfstring{$X$}{}-Rank, secants and restricted secants}

Given a variety $X \subset \PP V$, 
the $X$-rank of a point $[p] \in \PP V$ is the minimal number $s$ of points $[x_{1}],\dots, [x_{s}]$ such that $p$ lies in the span of the $x_i$. 
One notion of tensor rank (the CP rank) is defined when $X$ is the variety of rank-1 tensors (indecomposable  tensors). The $s$-secant variety of $X$, denoted $\sigma_{s}(X)$, is the Zariski closure of the points of $\PP V$ with $X$-rank $s$. Points in $\sigma_s(X)$ are said to have $X$-border rank $s$. While $X$-rank is not semi-continuous, $X$-border rank is semi-continuous by construction. 

Restricted secant varieties of Grassmannians generalize \cite{FultonHarris}*{Ex.~15.44} as follows:
\begin{defn}\label{def:rrest}
The $r$-restricted $s$-secant variety of $\Gr(k,V)$, is $\sigma_{s}^{r}(\Gr(k,V)) = $
\[
\overline{\left\{
[\lambda_1 \widehat E_1+\cdots + \lambda_s  \widehat E_s] \mid E_i \in \Gr(k,V), [\lambda]\in \PP^{s-1}, \dim({\textstyle \bigcap}_{i=1}^s E_i) \geq r
\right\}} \subset \PP \bw k V
.\]
\end{defn}
Note that it is necessary to define the dimension of the intersection as being $\geq r$ rather than $=r$ to ensure the variety is non-empty.
When more than $2$ $k$-planes are involved the intersection structure is more complicated, and is not, in general characterized by a single number. We find it already interesting to study this case,

\subsection{Inheritance and orbit stability}

Given a family $\mathcal{F}$ of algebraic varieties one can ask what properties a variety inherits from its subvarieties coming from the same family. For example, we define an \defi{orbit family} $\F = \F(W_\bullet, G_\bullet, X_\bullet)$ by the data: a chain of vector subspaces $W_\bullet = W_0\subset \cdots \subset W_i \subset \cdots \subset W_n$, a family of groups $G_\bullet$ with $G_i \subset \GL(W_i)$ and a family of varieties $X_\bullet$ with $X_i \subset \PP W_i$ and we require the property that $G_j.X_i \subset X_j$ whenever $i\leq j$. We say that \defi{orbit stability} occurs at step $p$ if $G_j.X_i = X_j$ whenever $p\leq i\leq j$ \cite{CGG1_tensors}. When orbit stability occurs, we can use this structure to compute the dimensions of the $X_i$ for $i\geq p$ precisely. Specifically, when the varieties $X_i$ are defined by the closure of a single orbit, orbit stability at step $p$ implies that all $X_i$ for $i\geq p$ have the same normal  form (representative of an orbit on a full-dimensional open set) $n\in X_i$. Hence, we can describe the tangent spaces to the $X_i$ at $n$ as follows for all $i\geq p$:
\[
\widehat {T}_n X_i = \widehat {T}_n X_p + \text{a correction term}
\]

To determine this correction term, we recall the following version of an orbit-stabilizer theorem for subspaces (see \cite{LandsbergTensorBook}*{6.9.4} for the case when $V$ is a line, and $G.V = G/P$ is a homogeneous variety). 
\begin{prop} \label{prop:orbstab}
Let $G$ be a connected compact complex semisimple Lie group contained in $\GL(W)$. 
Given a $G$-module $V\subset W$,  set $H = \Stab_G(V)$. Then 
\begin{equation}
\dim( G.V) =\dim(G/H)+\dim(V)
.\end{equation}
\end{prop}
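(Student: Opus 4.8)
The plan is to realize $G.V=\bigcup_{g\in G}gV\subseteq W$ as the image of a vector bundle over the homogeneous space $G/H$ and then compare dimensions, following the strategy used for the line case in \cite{LandsbergTensorBook}*{6.9.4} (there $V=\CC v$ and $G.V$ is the affine cone over the orbit $G.[v]=G/P$). Since $H=\Stab_G(V)$ preserves $V$, the translate $gV\subseteq W$ depends only on the coset $gH$, so I would first form the incidence variety
\[
\mathcal{I}\;:=\;\bigl\{\,(gH,\,w)\in(G/H)\times W \;:\; w\in gV\,\bigr\}\;\cong\;(G\times V)/H ,
\]
i.e.\ the associated homogeneous bundle $G\times_H V$. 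As $H$ acts freely on the first factor of $G\times V$, the quotient $\mathcal{I}$ is a variety with $\dim\mathcal{I}=\dim(G\times V)-\dim H=\dim(G/H)+\dim V$, and $\mathcal{I}$ is irreducible because $G$ is connected.

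Next I would push forward along the projection $\pi\colon\mathcal{I}\to W$, $(gH,w)\mapsto w$, whose image is exactly $\bigcup_{g}gV=G.V$. Thus $\overline{G.V}=\overline{\pi(\mathcal{I})}$ is irreducible with
\[
\dim(G.V)\;\le\;\dim\mathcal{I}\;=\;\dim(G/H)+\dim V ,
\]
and equality holds if and only if $\pi$ is generically finite onto its image, i.e.\ a general point of $G.V$ lies on only finitely many translates $gV$. Establishing this generic finiteness is the main obstacle.

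For this I would pass to the infinitesimal picture. Let $\phi\colon G\times V\to W$, $(g,v)\mapsto gv$, and write $\mathfrak{g}=\operatorname{Lie}(G)$. The differential of $\phi$ at $(e,v_0)$ is $(X,u)\mapsto X\cdot v_0+u$, with image $\mathfrak{g}\cdot v_0+V$; by equivariance of $\phi$ under left translations in the first factor, $\rank(d\phi_{(g,v)})=\rank(d\phi_{(e,v)})=\dim(\mathfrak{g}\cdot v+V)$, independent of $g$, so this rank is maximal for general $v_0\in V$ and hence $\dim(G.V)=\dim(\mathfrak{g}\cdot v_0+V)$ for such $v_0$. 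Setting $\mathfrak{g}_{v_0}:=\{X\in\mathfrak{g}:X\cdot v_0=0\}$, the identity $\dim(\mathfrak{g}\cdot v_0)=\dim\mathfrak{g}-\dim\mathfrak{g}_{v_0}$ and the isomorphism $\{X\in\mathfrak{g}:X\cdot v_0\in V\}/\mathfrak{g}_{v_0}\cong\mathfrak{g}\cdot v_0\cap V$ (via $X\mapsto X\cdot v_0$) reduce the asserted equality to the claim
\[
\{X\in\mathfrak{g}:X\cdot v_0\in V\}\;=\;\operatorname{Lie}(H)\qquad\text{for general }v_0\in V ,
\]
the inclusion $\supseteq$ being automatic since $\operatorname{Lie}(H)$ preserves $V$. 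When $V=\CC v_0$ is a line this is immediate — $X\cdot v_0\in\CC v_0$ says precisely that $X$ stabilizes the line $\CC v_0$ — and in fact $\pi$ is then birational, recovering \cite{LandsbergTensorBook}*{6.9.4}. For $\dim V>1$ this displayed equality is the genuine content; I would verify it directly in the settings where the proposition is applied, after which the two displays above give $\dim(G.V)=\dim(G/H)+\dim V$.
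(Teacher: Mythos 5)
Your construction of the associated bundle $G\times_H V\to W$ and the resulting inequality $\dim(G.V)\le\dim(G/H)+\dim V$ are fine, but the proof stops exactly where the proposition starts. By your own count, $\dim(G.V)=\dim\g-\dim\{X\in\g: X\cdot v_0\in V\}+\dim V$ for general $v_0$, so your displayed claim $\{X\in\g: X\cdot v_0\in V\}=\operatorname{Lie}(H)$ is \emph{equivalent} to the statement being proved; deferring it to ``the settings where the proposition is applied'' leaves nothing established. And the claim is not a formality: for a subspace $V\subset W$ that is merely $H$-stable (which is the situation in which Proposition~\ref{prop:biratstab} actually invokes this result, with $V=\widehat{T}_{x_p}X_p$), the inclusion $\operatorname{Lie}(H)\subseteq\{X: X\cdot v_0\in V\}$ can be strict for every $v_0$. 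For example, $G=\SL_2$, $W=\Sym^2\CC^2$, $V=\langle x^2,y^2\rangle$ gives $\dim(G/H)+\dim V=2+2=4>3=\dim W$, and one checks that $\{X: X\cdot v_0\in V\}$ is $2$-dimensional while $\operatorname{Lie}(H)$ is $1$-dimensional. So some hypothesis on how $V$ sits inside $W$ must enter the argument, and your proposal never uses any of the stated ones.

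The paper's proof is precisely an attempt at the step you omit: it splits $\g\subset\End(W)$ into blocks $\g_{ij}$ with respect to $W=V\oplus W/V$, identifies $\h=\g_{00}\oplus\g_{01}\oplus\g_{11}$, computes $[\g,V]=[\g_{00},V]\oplus[\g_{10},V]=V\oplus[\g_{10},V]$, and argues that $\dim(G/H)=\dim\g_{10}=\dim[\g_{10},V]$. In your language this says that the complement $\g_{10}$ of $\h$ in $\g$ maps injectively into $W/V$ under $X\mapsto X\cdot v_0$, which is exactly your missing equality. You should either carry out this block computation or supply another argument for the displayed claim; as written, the proposal is a correct reformulation of the proposition, not a proof of it.
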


\begin{proof}
The orbit $G.V$ can be seen as a parametrization:
\[\begin{matrix}
G\times V &\to& W \\ 
(g,v) & \mapsto & g.v
\end{matrix}
\]
The tangent space at $v = \text{Id}.v$ can be computed via the Lie algebra action:
\begin{equation}\label{eq:bracketTgt}
\widehat{T_{v}}G.V = v + [\g,V]
,\end{equation}
see \cite{HolweckOeding22}*{Prop.~3.18}.
The decomposition $W = V \oplus V/W$ induces a decomposition of the endomorphisms:
\[
\End(W) = W^*\otimes W = (V^* \otimes V) \oplus (V^* \otimes W/V) \oplus (W/V^* \otimes V) \oplus (W/V^* \otimes W/V) 
.\]
Define the corresponding subspaces $\g_{ij}$ of $\g \subset \End(W)$ via restriction. Seen as a matrix,
\[\g = \begin{pmatrix}
\g_{00} & \g_{01} \\
\g_{10} & \g_{11} 
\end{pmatrix}.
\]
In addition, $\h = \g_{00}\oplus \g_{01} \oplus \g_{11}$ is the subalgebra of $\g$ that stabilizes $V$.  Hence
\begin{multline}
[\g,V] = [\g_{00}\oplus \g_{10} \oplus  \g_{01} \oplus \g_{11}, V] = 
[\g_{00},V]\oplus [\g_{10},V] \oplus [\g_{01},V] \oplus [\g_{11}, V] 
\\
=[\g_{00},V]\oplus [\g_{10},V]  .
\end{multline}
Since $V$ is a $G$-module it is also a $\g_{00}$-module and $[\g_{00},V] = V$. 
 Moreover $v\in V$ so $v+[\mathfrak g, V] = v + [\g_{00},V]\oplus [\g_{10},V]   = V \oplus [\g_{10},V]  $.
Finally, $\g/\h = \g_{10}$, so $\dim(G/H) = \dim ([\g_{10},V])$. 
\end{proof}

We can then apply this to the family of varieties with symmetry.
\begin{prop} \label{prop:biratstab}
Suppose an orbit family $\F$ achieves orbit stability at step $p$, and that $G_i$ acts transitively on the set of $\dim(V_p)$-planes for each $i\geq p$. Define a fiber bundle $\Xi \to \Gr(\dim (V_p), V_i)$ with each fiber over $E \in \Gr(\dim (V_p), V_i)$ equal to a copy of $X_p \subset \PP E$.

Then for all $i\geq p$, $X_i$ is birational to the total space of $\Xi$, and in particular
\begin{equation}
\dim (X_i) = \dim (X_p) + \dim(\Gr(\dim V_p,V_i))
.\end{equation}
\end{prop}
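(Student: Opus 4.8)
The plan is to exhibit, for each $i\geq p$, a dominant rational map from $X_i$ to the Grassmannian $\Gr(\dim V_p,V_i)$ whose general fiber is a copy of $X_p$, and then to split this map rationally using the transitivity hypothesis. First I would use orbit stability at step $p$ together with the standing assumption that each $X_i$ is the closure of a single orbit: this produces one common normal form $n$, which we may take to lie in $\PP W_p\subseteq\PP W_i$, with $X_i=\overline{G_i\cdot n}$ for every $i\geq p$; in particular $\dim X_i=\dim(G_i\cdot n)$. The geometric input that makes the argument run is that $n$ is ``supported'' on a distinguished subspace $V_p$ of dimension $d:=\dim V_p$, meaning $n$ lies in the piece of $X_i$ built out of $V_p$ and that for $n$ general in its $G_i$-orbit this subspace is reconstructed from $n$. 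In the running example $\sigma_s^r(\Gr(k,V))$ one takes $V_p$ to be the common intersection $\bigcap_i E_i$ of the $k$-planes occurring in a general decomposition of $n$; it is recovered from $n$ since a general such point has an essentially unique such decomposition.

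Granting this, I would define $\pi_i\colon X_i\dashrightarrow\Gr(d,V_i)$ on the dense open locus of points $g\cdot n$ by $g\cdot n\mapsto g\cdot V_p$; reconstructibility of $V_p$ makes this well defined there, and $G_i$-transitivity on $\Gr(d,V_i)$ makes it dominant. The central step is to identify a general fiber. By transitivity it suffices to compute $\pi_i^{-1}([V_p])=\overline{\Stab_{G_i}(V_p)\cdot n}$. Writing $\Stab_{G_i}(V_p)$ in its parabolic block form relative to a splitting $V_i=V_p\oplus U$, with Levi part acting on $V_p$ and on $U$ and unipotent radical the off-diagonal ``$\g_{10}$'' piece appearing in the proof of Proposition~\ref{prop:orbstab}, I would check, using that $n$ is supported on $V_p$, that the unipotent radical acts trivially on $n$ and the Levi factor acting on $V_p$ acts by scalars, so that $\Stab_{G_i}(V_p)\cdot n$ is moved only by the remaining Levi factor $\GL(U)$ and its closure is precisely a linearly embedded copy of $X_p$. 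In the example this is transparent: $\Stab_{\GL(V)}(E)$ acts on $n=\widehat E\wedge\omega$ only through $\GL(V/E)$ acting on $\omega$, because $\widehat E\wedge(\text{anything lying in }E)=0$ annihilates every contribution of the unipotent part, and $\widehat E\wedge(-)\colon\PP\bw{k-r}(V/E)\hookrightarrow\PP\bw{k}V$ carries $\sigma_s(\Gr(k-r,V/E))$ isomorphically onto the fiber.

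To conclude, I would choose a rational section $\tau\colon\Gr(d,V_i)\dashrightarrow G_i$ of the quotient map $g\mapsto g\cdot V_p$, which exists because $G_i\to G_i/\Stab_{G_i}(V_p)\isom\Gr(d,V_i)$ is Zariski-locally trivial, and set $(E,x)\mapsto\tau(E)\cdot x$; this gives a rational map from the total space of $\Xi$ to $X_i$ with rational inverse $y\mapsto(\pi_i(y),\,\tau(\pi_i(y))^{-1}\cdot y)$, so $X_i$ is birational to $\Xi$. Since $\Xi\to\Gr(d,V_i)$ is a fiber bundle with fiber $X_p$, the dimension formula $\dim X_i=\dim X_p+\dim\Gr(d,V_i)$ follows by additivity of dimension along a fiber bundle; the same count is reproduced by Proposition~\ref{prop:orbstab}, which gives $\dim(G_i/\Stab_{G_i}(V_p))=\dim\Gr(d,V_i)$, together with $\dim(\Stab_{G_i}(V_p)\cdot n)=\dim X_p$ from the fiber analysis.

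The main obstacle is the fiber identification in the second paragraph: at the level of an abstract orbit family one must first formalize what ``$n$ is supported on $V_p$'' means and then prove that $\Stab_{G_i}(V_p)$ acts on $n$ only through the quotient group governing $X_p$, i.e.\ that its unipotent radical acts trivially and the central Levi factor acts by scalars. This is exactly the structural fact encoding why enlarging $G_p$ to $G_i$ contributes only Grassmann directions and no extra defect, and it is where the hypotheses that $\F$ is an orbit family reaching orbit stability --- not merely an arbitrary chain --- are genuinely used. A minor secondary point is to verify that the general fiber of $\pi_i$ is irreducible of the expected dimension, so that ``birational to the total space of $\Xi$'' is literally correct; this follows once $\Stab_{G_i}(V_p)$ is taken connected.
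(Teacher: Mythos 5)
Your proposal is correct in outline but takes a genuinely different route from the paper. The paper's proof works pointwise with the normal form: it sends a general $[x]=[x_p]\in X_i$ to the pair $(\widehat{T}_{x_p}X_p,\,x_p)$, using the (cone over the) tangent space at the normal form as the distinguished linear space whose $G_i$-orbit is supposed to sweep out the Grassmannian base, and inverts by translating a copy of $X_p$ sitting inside a $G_i$-translate of that tangent space. You instead reconstruct the \emph{support} $V_p$ from a general point, fiber $X_i$ over $\Gr(\dim V_p, V_i)$ via $g\cdot n\mapsto g\cdot V_p$, identify the fiber over $[V_p]$ as $\overline{\Stab_{G_i}(V_p)\cdot n}$ using the parabolic block structure from Proposition~\ref{prop:orbstab}, and split the map with a rational section of $G_i\to G_i/\Stab_{G_i}(V_p)$. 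Your version is closer to what the paper actually does later in Theorem~\ref{thm:fiber} (where $E$ is recovered from $w$ as the kernel of a flattening of $\Omega_V(w)$), and it buys a cleaner well-definedness argument plus an honest identification of the general fiber; the paper's version is shorter but conflates $\dim(V_p)$-planes in $V_i$ with the tangent space $\widehat{T}_{x_p}X_p$ (a cone of dimension $\dim X_p+1$, not $\dim V_p$), so the base of its bundle is not literally the orbit of that tangent space.

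The step you flag as the main obstacle --- formalizing ``$n$ is supported on $V_p$'' and proving that the unipotent radical of $\Stab_{G_i}(V_p)$ fixes $n$, so that the fiber is exactly a copy of $X_p$ and nothing larger --- is a real gap, but it is equally a gap in the paper's own proof, which simply asserts that the fiber is a copy of $X_p$. For an arbitrary orbit family the statement is not even fully posed as written ($X_p\subset\PP W_p$, while the proposition places a copy of $X_p$ inside $\PP E$ for $E\subset V_i$), so some concrete supportedness hypothesis is needed in either argument; in the intended family $\mathcal{G}(r,s,k)$ it is supplied by the explicit factorization $w=\widehat E\wedge t$ in Theorem~\ref{thm:fiber}. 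Your proof is therefore not weaker than the paper's; if anything, you have isolated the missing lemma that the paper leaves implicit.
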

\begin{proof}
We will show that $X_i$ is bi-rational to the fiber bundle $\Xi$ defined in the statement. Then the total space of $\Xi$ has dimension equal to the dimension of the general fiber plus the dimension of the base, or $\dim( X_p) + \dim(\Gr(\dim V_p,V_i))$, so the ``moreover'' part follows.

Let $[x] \in X_i$ be a general point, so we can assume $x$ is on the orbit $G_i.x_i$. Because of orbit stability at step $p$ we can take $x$ to be the normal form for $X_p$, $x =x_p\in X_p$.
Consider the vector space $\widehat{T}_{x_p} X_p$, and take its orbit under the action of $G_i$. Since $G_i$ acts transitively on $\dim (X_p)$ planes, this orbit is $\Gr(\dim(V_p), V_i)$, so we can send $x_p$ to the pair $(\widehat{T}_p X_p, x_p)$, this is a rational mapping. 

For the other direction, suppose we have a pair $(E,x) \in \Xi$ with $x \in \tilde X_p$, where $\tilde X_p$ denotes a copy of $X_p$ in $E$. Then by the assumption that $G_i$ acts transitively on $\dim(V_p)$-planes we can assume that the linear space $E$ is a $G_i$ translate of $\widehat{T}_{x_p} X_p$, hence $x\in g.X_p \subset g.\widehat{T}_n X_p$, with $g\in G_i$.
What is left to show is that the composition of the two maps is the identity. Let $[x] \in X_i$ be a general point. Because of orbit stability, $x=x_p \in X_p$. Apply the first map. Take the orbit of $x_p$ under the action of $G_i$. This produces a pair  $(\widehat{T}_{x_p} X_p, x_p)$. Then, it is true that $x_p \in X_p$ and not just $\tilde X_p$, where $\tilde X_p$ was a copy of $X_p$ in $E$. Further,  since  $G_i$ acts transitively on $\dim (V_p)$-planes, applying the second map and acting on $\widehat{T}_{x_p} X_p$ by $G_i$ means $x_p \in g.\widehat{T}_{x_p} X_p$. But we had $x_p=x$, therefore we arrive back at $[x]$.
\end{proof}

We're interested in the case for fixed $k,r,s$ with  $W_i = \bw{k}V_i$, with $V_0 \subset \cdots \subset V_n $ and $V_i \cong \CC^i$, $G_i = \GL(V_i)$ and $X_i = \sigma_s^r\Gr(k,V_i)$. We denote this family by $\mathcal{G}(r,s,k) =(\bw{k}V_\bullet, \GL(V_\bullet), \sigma_s^r \Gr(k,V_\bullet))$. 
These varieties are defined as orbit closures, and in this particular case orbit stability implies stability of normal forms.

\begin{prop} \label{prop:famstab}
The family $\mathcal{G}(r,s,k)$ obtains orbit stability (at least) when  $p=r+s(k-r)$.
\end{prop}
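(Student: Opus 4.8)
The plan is to pin down a general point of $X_j=\sigma_s^r\Gr(k,V_j)$ for $j\ge p$, show it automatically lies in $\bw k V'$ for some $p$-dimensional subspace $V'\subseteq V_j$, and deduce it is a $G_j$-translate of a general point of $X_p=\sigma_s^r\Gr(k,V_p)$. Together with the trivial reverse containment $G_j.X_i\subseteq X_j$, this gives orbit stability at step $p$, and the conclusion then holds for $p=r+s(k-r)$.

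First I would set up the general point. By definition $\sigma_s^r\Gr(k,V_j)$ is the closure of the locus of $[\lambda_1\widehat E_1+\cdots+\lambda_s\widehat E_s]$ with $\dim\bigcap_{i=1}^s E_i\ge r$; a configuration lies in this locus exactly when the $E_i$ share a common $r$-plane $E$, so $\sigma_s^r\Gr(k,V_j)$ is the closure of the image of the incidence variety $\{(E,E_1,\dots,E_s,[\lambda]):E\in\Gr(r,V_j),\ E\subseteq E_i\in\Gr(k,V_j),\ [\lambda]\in\PP^{s-1}\}$, which fibers over $\Gr(r,V_j)$ with fiber $\Gr(k-r,V_j/E)^s\times\PP^{s-1}$ over $E$ and is therefore irreducible. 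Hence a general point of $X_j$ has the form $[x]=[\lambda_1\widehat E_1+\cdots+\lambda_s\widehat E_s]$ for a general $r$-plane $E$, general $k$-planes $E_i\supseteq E$, and general $[\lambda]$.

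Next I would count the dimension of $V':=\langle E_1,\dots,E_s\rangle$. Writing $E_i=E\oplus F_i$ with $\dim F_i=k-r$ and letting $\overline F_i\subset V_j/E\cong\CC^{\,j-r}$ be the image of $F_i$, generality makes $\overline F_1,\dots,\overline F_s$ general $(k-r)$-planes; since $j\ge p$ forces $s(k-r)\le j-r$, general $(k-r)$-planes in $\CC^{\,j-r}$ are in direct sum, so $V'=E\oplus F_1\oplus\cdots\oplus F_s$ has dimension exactly $p=r+s(k-r)$ and $x\in\bw k V'$, with $[x]\in\sigma_s^r\Gr(k,V')$. Since $G_j=\GL(V_j)$ acts transitively on $p$-dimensional subspaces, I then pick $g\in G_j$ with $g(V')=V_p$ (using $p\le j$); now $g.[x]=[\lambda_1\widehat{gE_1}+\cdots+\lambda_s\widehat{gE_s}]$ lies in $\sigma_s^r\Gr(k,V_p)$, hence in $\sigma_s^r\Gr(k,V_i)=X_i$ for all $i$ with $p\le i\le j$ (the inclusion $V_p\subseteq V_i$ carries $k$-planes to $k$-planes and preserves intersection dimensions). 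Thus $[x]\in G_j.X_i$. Since such points are dense in the irreducible variety $X_j$ while $G_j.X_i\subseteq X_j$ by $\GL(V_j)$-invariance of $\sigma_s^r\Gr(k,V_j)$ (and $X_p\subseteq X_i\subseteq X_j$), we conclude $\overline{G_j.X_i}=X_j$, i.e. orbit stability at $p$. One also checks that $(gE_1,\dots,gE_s)$ spans $V_p$, a nonempty open condition on configurations in $V_p$ precisely when $p=r+s(k-r)$, so $g.[x]$ is general in $X_p$ and the normal forms of the $X_i$ for $i\ge p$ agree, as needed in Proposition~\ref{prop:biratstab}.

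The main obstacle is the genericity bookkeeping: one must be certain that a general $[x]\in\sigma_s^r\Gr(k,V_j)$ really does arise from a general $E$ with the $\overline F_i$ in general position, rather than from some special configuration (this uses that the incidence variety genuinely dominates $\sigma_s^r\Gr(k,V_j)$), and that the translated configuration in $V_p$ is generic enough to serve as the normal form. The transitivity of $\GL(V_j)$ on $p$-planes, the $\GL$-invariance, and the fact that $s$ general $(k-r)$-planes in $\CC^{\,j-r}$ are independent once $s(k-r)\le j-r$ are all routine.
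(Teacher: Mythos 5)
Your proof is correct and follows essentially the same route as the paper: the key point in both is that a general point of $\sigma_s^r\Gr(k,V_j)$ involves only $r+s(k-r)$ independent vectors, so for $j\ge p$ it can be moved by $\GL(V_j)$ into $\bw{k}V_p$. Your write-up simply makes the paper's parameter count rigorous by routing it through the incidence variety and the transitivity of $\GL(V_j)$ on $p$-planes, which is a welcome elaboration but not a different argument.
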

\begin{proof}
Note the condition that $\mathcal{G}(r,s,k)$ obtains orbit stability at step $p$, where $p=\dim(V_{p})$ and $\sigma_{s}^{r}(\Gr(k,V_{p}))$ can be guaranteed at the first instance where there are enough linearly independent basis vectors to define a general point $x \in \sigma_{s}^{r}(\Gr(k,V_{p}))$ with no additional intersection.
Now count independent parameters. For a given, $r,s,k$ there is an $r$-dimensional overlap which accounts for $r$ elements  $e_i \in V$  and additionally each of the $s$ copies of the Grassmannian requires $k-r$ more $e_i$ elements for a total of $r+s(k-r)$. These $e_i \in V$ can be chosen independently if $n\geq p = r+s(k-r)$.
\end{proof}

\begin{prop} \label{prop:dimfam}
Suppose $\mathcal{G}(r,s,k)$ attains orbit stability at step $p$. For all $n\geq p$ we have
\begin{multline}
\dim( \sigma_s^{r}\Gr(k,V_n) )= \dim (\sigma_{s}^{r}\Gr(k,V_p))+\dim (\Gr(p,n) )
\\
= r(p-r)+s((k-r)(p-k))+s-1+p(n-p).
\end{multline}
\end{prop}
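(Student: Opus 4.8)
The plan is to combine the two structural results already established: Proposition~\ref{prop:famstab}, which tells us that the family $\mathcal{G}(r,s,k)$ attains orbit stability at step $p = r+s(k-r)$, and Proposition~\ref{prop:biratstab}, which converts orbit stability into a dimension formula via the fiber-bundle construction. First I would invoke Proposition~\ref{prop:biratstab} directly with the identification $W_i = \bw{k}V_i$, $G_i = \GL(V_i)$, $X_i = \sigma_s^r\Gr(k,V_i)$, and $V_p \cong \CC^p$. Since $\GL(V_i)$ certainly acts transitively on the set of $p$-planes in $V_i$ for every $i \geq p$, the hypotheses of Proposition~\ref{prop:biratstab} are met, and it yields immediately
\[
\dim\bigl(\sigma_s^r\Gr(k,V_n)\bigr) = \dim\bigl(\sigma_s^r\Gr(k,V_p)\bigr) + \dim\bigl(\Gr(p,n)\bigr)
\]
for all $n \geq p$. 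This is the first displayed equality, so the only remaining work is to evaluate the two terms on the right-hand side explicitly.

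The term $\dim(\Gr(p,n)) = p(n-p)$ is standard. For the term $\dim(\sigma_s^r\Gr(k,V_p))$, the point is that at step $p$ the defining parameters are, by the counting argument in the proof of Proposition~\ref{prop:famstab}, algebraically independent, so the secant variety fills up to its expected dimension: a general point is $[\lambda_1 \widehat E_1 + \cdots + \lambda_s \widehat E_s]$ where the $E_i$ share a common $r$-plane $F \subset V_p$ and are otherwise generic. I would count the dimension of the parameter space of such configurations. Choosing the common $r$-plane $F$ in $V_p$ contributes $\dim\Gr(r,V_p) = r(p-r)$. Passing to the quotient $V_p/F \cong \CC^{p-r}$, each $E_i$ corresponds to a $(k-r)$-plane in $V_p/F$, contributing $s\cdot\dim\Gr(k-r,p-r) = s(k-r)(p-r-(k-r)) = s(k-r)(p-k)$. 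The scalars $[\lambda] \in \PP^{s-1}$ contribute $s-1$. Summing gives $r(p-r) + s(k-r)(p-k) + s - 1$, and I would then argue that the parametrization is generically finite onto its image — equivalently, that the corresponding sum of tangent spaces at a general point is direct — which is precisely what orbit stability at $p$ guarantees (there is no unexpected intersection forcing a drop, and no extra automorphisms of the configuration when the data is generic). Adding $p(n-p)$ produces the claimed closed form
\[
\dim\bigl(\sigma_s^r\Gr(k,V_n)\bigr) = r(p-r) + s(k-r)(p-k) + s - 1 + p(n-p).
\]

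The main obstacle is the middle step: justifying that $\dim(\sigma_s^r\Gr(k,V_p))$ equals the naive parameter count rather than being smaller. In principle a restricted secant variety could be defective even at the stability threshold, and one must rule this out. The cleanest route is to observe that at step $p = r+s(k-r)$ the normal form uses the basis vectors $e_1,\dots,e_p$ with each appearing in exactly the planes it should and no forced coincidences, so a Terracini-style computation of $\widehat{T}_{\text{pt}}\,\sigma_s^r\Gr(k,V_p)$ — the span of the tangent spaces $\widehat{T}_{E_i}\Gr(k,V_p)$ for $i=1,\dots,s$ — is a sum that is direct modulo the common part coming from the shared $r$-plane, exactly accounting for the parameters listed above. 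I would make this precise using the simplex/Hamming-ball description of $\widehat{T}_E\Gr(k,V)$ recalled in Section~\ref{sec:notation}: the relevant multi-index sets attached to the $s$ generic planes, together with their common flat, are in ``general position'' as subsets of $\mathbb{S}_k$ when $n=p$, so the spans they generate meet only in the prescribed overlap. Once this dimension count at step $p$ is in hand, everything else is bookkeeping, and the two displayed equalities follow.
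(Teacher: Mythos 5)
Your strategy is the same as the paper's: reduce to step $p$ via the birational/fiber-bundle structure (the paper invokes the orbit--stabilizer count of Proposition~\ref{prop:orbstab}; you invoke Proposition~\ref{prop:biratstab}), and then evaluate $\dim(\sigma_s^r\Gr(k,V_p))$ by counting parameters in the incidence construction. The first displayed equality and the $p(n-p)$ term are fine. The genuine gap is exactly the step you flag as ``the main obstacle,'' and your proposed repair does not work: the claim that at $p=r+s(k-r)$ the tangent spaces at the normal form meet only in the overlap forced by the common $r$-plane (equivalently, that the Hamming-ball index sets are in ``general position'') is false in general. Concretely, take $r=1$, $k=3$, $s=2$, so $p=5$. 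The closed form gives $r(p-r)+s(k-r)(p-k)+s-1=4+8+1=13$, but Proposition~\ref{prop:3case} shows $\dim\sigma_2^1(\Gr(3,V_5))=5\cdot 5-16=9$ (it fills $\PP\bw{3}\CC^5=\PP^9$), and for $n\geq 5$ the formula's $5n-12$ disagrees with the true $5n-16$ by exactly $4=\dim(\widehat{T}_{123}\cap\widehat{T}_{145})$, the unexpected intersection recorded in the remark after that proposition. More systematically, whenever $k-r=2$ the fiber over the common $r$-plane is $\sigma_s(\Gr(2,V_p/E))$, a secant variety of a Grassmannian of lines, which is always defective; no general-position argument on index sets can establish directness of the tangent spans there.

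To be fair, the paper's own two-line proof asserts the same count without addressing this, and the surrounding results (Proposition~\ref{prop:2d-defect}, Corollary~\ref{thm:classification}, Table~\ref{tab:defective}) make clear that the closed-form expression must be read as a virtual dimension, correct precisely when the fiber $\sigma_s(\Gr(k-r,V_p/E))$ is non-defective and the count does not exceed the ambient dimension. So your write-up would be acceptable if you either added that non-defectivity hypothesis explicitly (deducing $\dim\sigma_s^r\Gr(k,V_p)$ from Theorem~\ref{thm:fiber} as $\dim\Gr(r,p)+\dim\sigma_s(\Gr(k-r,p-r))$ rather than from the naive count), or restricted the second equality to the non-defective range. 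As written, the Terracini-style justification is the step that fails.
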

\begin{proof}
When orbit stability occurs there exists a birational morphism:
\[
G_j \times X_j \dashrightarrow X_j
\]
Using the orbit-stabilizer theorem \ref{prop:orbstab} we obtain a dimension count: $r(p-r)+s((k-r)(p-k))+s-1+p(n-p)$.
\end{proof}

\begin{example}
Apply this argument above to the 1-restricted case for  $\sigma_{s}^{1}(\Gr(3,V))$. 
We have the following chain of inclusions. 
\[
\Gr(k,a) \subset \PP \bw{k} \CC^{a} \subset \PP \bw{k} \CC^{n}.
\]
Then, $\sigma_{s}^{1}(\Gr(k,V))$ can be found by taking the appropriate orbits of $\sigma_{s}^{1}(\Gr(k,a))$:
\[\GL(n).\sigma_{s}^{1}(\Gr(k,a)) \subset \sigma_{s}^{1}(\Gr(k,n)).\]
Consider $\sigma_{3}^{1}(\Gr(3,n))$, and take $a=7$ in this case. So, 
$\sigma_{3}^{1}(\Gr(3,V))$ is birational to $\Gr(7,V)) \times \sigma_{3}^{1}(\Gr(3,7))$. Therefore, 
\[
\dim(\sigma_{3}^{1}(\Gr(3,n)))= \dim (\Gr(7,n))+\dim(\sigma_{3}^{1}(\Gr(3,7))).
\]
So, 
$\dim(\sigma_{3}^{1}(\Gr(3,n)))= 7\cdot(n-7)+31=7n-18$ for $n\geq 7$. 

This leads to other explicit formulas for $1$-restricted chordal varieties such as:
\[
\dim(\sigma_2^1(\Gr(4,n))) = 7n-24, \quad \text{for} \quad n \geq 7,
\]
and
\[
\dim(\sigma_2^1(\Gr(5,n))) = 9n-40, \quad \text{for} \quad n \geq 9.
\]
\end{example}

\section{Dimension Calculations}\label{sec:dim}
\subsection{General setup}   
A standard method to compute the dimension of a parametrized projective variety is via differentials.
Recall a \defi{parametrization} is a rational mapping
\[
\phi\colon \PP^M \to \PP ^N
,\]
defined on a non-trivial open subset $U \in \PP^M$. 
That the map $\phi$ is rational  means that $[\phi(x)] = [\phi_0(x):\ldots: \phi_N(x)]$ with $\phi_i(x)$ a rational function for each coordinate $i$.
Recall that the \defi{image} $X$ of a rational mapping $\phi$ is the Zariski closure $\overline{\varphi(U)}$ and note that the definition doesn't depend on which non-trivial open subset we choose as long as $\phi$ is defined on that set. Work on the cone over $U$ and take the total differential (the Jacobian):
\[
d\phi \colon \widehat U \to \CC^{N+1},
\]
noting that $\widehat{T}_{p} U = \CC^{M +1}$, and $\widehat{T}_{\phi(p)}\CC^{N+1} = \CC^{N+1}$.
At a point $[p] \in U$, the linear mapping 
\[d\phi_p \colon
\CC^{M+1}
\to \CC^{N+1}
,\]
may be represented by a matrix with $(i,j)$ entry $\frac{\partial \phi_i}{\partial x_j}(p)$, with $0\leq i\leq N$ and $0\leq j \leq M$. 
The image of $d\phi_p$ for general $[p]\in U $ is the the tangent space $\widehat{T}_{\phi(p)} X$, and hence the rank of $d\phi_p$ computes the dimension of the cone $\widehat X$. 

In summary, to compute the dimension of a parametrized variety we may
\begin{enumerate}
    \item Generate sufficiently many random points $[p]$ of the source.
    \item Compute the partial derivatives $\frac{\partial \phi_i}{\partial x_j}(p)$ and populate the matrix $d\phi_p$.
    \item Compute the rank of the matrix $d\phi_p$.
\end{enumerate}

\subsection{Computing dimensions of  \texorpdfstring{$\sigma_s^r\Gr(k,n)$}{}}
We verified the calculation of dimension for several families of restricted chordal varieties with Macaulay2 \cite{M2}. An example computation can be found in the ancillary files associated with the arXiv version of this article.

Since any $k$-dimensional subspace of an $n$-dimensional space can be represented as the row space of a $k\times n$ matrix, a parametrization for $\sigma_s(\Gr(k,n))$ is given by
\[
\phi\colon \PP (\CC^{k\times n})^{\times s} \to \PP \bw k \CC^{n}
,\]
which takes an $s$-tuple of $k\times n$ matrices (up to scale) to the sum of their vectors of $k$-minors. The open set we work on is the one where all of the matrices in question have full rank.

The Jacobian at a point $p$ is a linear mapping
\[
d\phi\colon  (\CC^{k\times n})^{\times s} \to  \bw k \CC^{n}
,\]
whose coordinates are evaluations of derivatives of sums of minors. Its size is $\binom{n}{k} \times (kns) $. We write $d\phi(A)$ (and similar) to indicate a symbolic Jacobian, and $d\phi_C$ to indicate the evaluation at a point parametrized by an $s$-tuple of matrices $C$.

Similarly, a parametrization $\phi^r$ for $\sigma_s^r(\Gr(k,n))$ is given by restricting the source of $\phi$ to a set where the $s$-tuple of matrices mutually share $r$ row vectors. This restricted source is
\[
(\CC^{r\times n})\times 
(\CC^{(k-r)\times n})^{\times s},
\]
where the first factor is the shared rows. So the Jacobian $d\phi^r$ has size  $\binom{n}{k} \times (rn + (k-r)ns) $. 

Focus on the case $s =2$ for the moment, the case of general $s$ is similar. Given two symbolic matrices $A$ and $B$ in $\CC^{k\times n}$ with the first $r$ rows of $B$ the same as those of  $A$ (to reflect the overlap in their row spaces) we can represent the structure of the sum of the Jacobians of their Pl\"ucker images. 
So $d\phi(A+B) = d\phi(A) + d\phi(B)$. 
Let $A = (a_{ij})$ with $0 \leq i \leq k-1$, $0 \leq j \leq n-1$ and
$B = (b_{ij})$, with $ 0 \leq i \leq k-r-1$, $0 \leq j \leq n-1$. Let $d=\binom{n}{k}$ and $A_I$ represent the maximal minor of $A$ with columns $I$ and order the multi-indices $I$ lexicographically and re-name them $m_1,\ldots, m_d$. The Jacobians of the Pl\"ucker maps of $A$ and $B$ are the following. 
\[
  d\phi(A) = 
    \begin{pmatrix} 
       \dfrac{\partial A_{m_{1}}}{\partial a_{00}} 
       & \cdots &
       \dfrac{\partial A_{m_{d}}}{\partial a_{00} }
       \\
 \vdots & \ddots & \vdots \\
       \dfrac{\partial A_{m_{1}}}{\partial a_{(k-1)(n-1)}}
       & \cdots &
      \dfrac{\partial A_{m_{d}}}{\partial a_{(k-1)(n-1)}} 
      \end{pmatrix}
\]
\[
  d\phi(B) = 
    \begin{pmatrix}
     \dfrac{\partial B_{m_{1}}}{\partial a_{00}} & \cdots &
     \dfrac{\partial B_{m_{1}}}{\partial a_{(r-1)(n-1)}} &
     \dfrac{\partial B_{m_{1}}}{\partial b_{00}} &
     \dots &\dfrac{\partial B_{m_{1}}}{\partial b_{(k-r-1)(n-1)}} 
      \\
       \vdots & \ddots & \vdots &\vdots& \ddots & \vdots \\
     \dfrac{\partial B_{m_{d}}}{\partial a_{00}} & \cdots &
     \dfrac{\partial B_{m_{d}}}{\partial a_{(r-1)(n-1)}} &
     \dfrac{\partial B_{m_{d}}}{\partial b_{00}} &
     \dots &\dfrac{\partial B_{m_{d}}}{\partial b_{(k-r-1)(n-1)}} 
      \end{pmatrix}^\top     
\]
Therefore, $d\phi(A)+d\phi(B) = $
\begin{equation}\label{eq:blockJacobian}\resizebox{0.91\hsize}{!}{%
 $ \begin{pmatrix}
     \dfrac{\partial A_{m_{1}}}{\partial a_{00}} +
     \dfrac{\partial B_{m_{1}}}{\partial a_{00}} & \cdots &
     \dfrac{\partial A_{m_{1}}}{\partial a_{(r-1)(n-1)}} +
     \dfrac{\partial B_{m_{1}}}{\partial a_{(r-1)(n-1)}} &
     \dfrac{\partial A_{m_{1}}}{\partial a_{(r-1)(n-1)+1}} 
       & \cdots &
     \dfrac{\partial A_{m_{1}}}{\partial a_{(k-1)(n-1)}} &
     \dfrac{\partial B_{m_{1}}}{\partial b_{00}} & \cdots &
     \dfrac{\partial B_{m_{1}}}{\partial b_{(k-r-1)(n-1)}} 
     \\
       \vdots &\ddots & \vdots & \vdots & \ddots & \vdots & \vdots& \ddots & \vdots \\
     \dfrac{\partial A_{m_{d}}}{\partial a_{00}} +
     \dfrac{\partial B_{m_{d}}}{\partial a_{00}} & \cdots &
     \dfrac{\partial A_{m_{d}}}{\partial a_{(r-1)(n-1)}} +
     \dfrac{\partial B_{m_{d}}}{\partial a_{(r-1)(n-1)}} &
     \dfrac{\partial A_{m_{d}}}{\partial a_{(r-1)(n-1)+1}} 
       & \cdots &
     \dfrac{\partial A_{m_{d}}}{\partial a_{(k-1)(n-1)}} &
     \dfrac{\partial B_{m_{d}}}{\partial b_{00}} & \cdots &
     \dfrac{\partial B_{m_{d}}}{\partial b_{(k-r-1)(n-1)}} 
      \end{pmatrix}^\top$
      }.
\end{equation}
We use this block structure to make our computations more efficient.

We generate a collection $C = (C_1,\ldots, C_s)$ of random matrices $C_i \in  \CC^{k\times n}$ with the appropriate overlap of their row spaces. Via Terracini's Lemma the Jacobian $d\phi_C$ is the sum  of the differentials of the Pl\"ucker maps, $d\phi_C(A_i)$. The rank of the resulting matrix is equal to the dimension of that restricted chordal variety (as long as the initial choice of $C$ was sufficiently general, which it will be with probability 1). 

\subsection{Computing the dimension of secants in \texttt{M2}}
A naive implementation to compute the dimension of a secant variety of the Grassmannian in \texttt{M2} is given below:
\begin{verbatim}
testnk = (n,k) -> (
  R = QQ[a_(0,0)..a_(k-1,n-1),b_(0,0)..b_(k-1,n-1)];
  A = transpose genericMatrix(R, a_(0,0), n,k);
  B = transpose genericMatrix(R, b_(0,0), n,k);
  fun = matrix{apply(subsets(n,k), s-> det A_s + det B_s )};
  jac = diff(transpose basis(1, R), fun);
  val = map(QQ,R, random(QQ^1,QQ^(dim R)));
  rank val jac
)
\end{verbatim}

The first $3$ lines define the source variables (ring) and matrices. It then defines the mapping, \texttt{fun}, and differentiates with respect to the column vector of variables to calculate the Jacobian as a matrix. Note $\texttt{M2}$ also has a command for Jacobian.

This gives us a set of polynomials which we can evaluate and then find the rank of the corresponding numerical matrix. Note that in this case with negligible computational time we see that the rank is $26$, which is indeed the dimension of the cone over the secant of the Grassmannian $\sigma_2(\Gr(3,7))$.

One can modify the procedure as follows to handle the restricted secant case:
\begin{verbatim}
n = 7; k=3; r=1;
R = QQ[a_(0,0)..a_(k-1,n-1),b_(0,0)..b_(k-r-1,n-1)];
A = transpose genericMatrix(R, a_(0,0), n,k);
B = A^{0..r-1}||transpose genericMatrix(R, b_(0,0), n,k-r);
\end{verbatim}

Note there are fewer variables needed because of the overlap, and we force the matrices to share an $r$ dimensional overlap (the first $r$ rows).
The exact same functions as before compute the Jacobian and its rank.
 For example, in the case of $k=3$,  $n=7$, $r=1$ we find the dimension of the cone over $\sigma_2^1(\Gr(3,7))$ is $20$. We tested this straightforward calculation for $r=1,2$ and $k,n=2,\ldots,10$, as well as for $s=3$, $r=1$, $k,n=2,\ldots,10$.

\subsection{Computational efficiency}

The above naive implementation for calculating the dimension of the restricted chordal variety is not efficient enough to handle larger computations. There is a trade-off of easy-to-implement formulas that ignore redundancy versus more careful implementation that is aware of these redundancies. In addition, we should pay attention to the order of operations for evaluation, in order to limit the size of intermediate computations.

In the naive implementation we take $s$ symbolic matrices with the required $r$-dimensional overlap, and for each of those matrices determine the symbolic Jacobian, and then evaluate at a random point. 
However, the corresponding computation of differentials of minors is very inefficient for even very small cases. For example, a case as small as $s=2$,$r=1$,$k=8$,$n=10$ has Jacobian consisting of more than $100,000$ total terms and takes at least $20$ minutes on a local system to evaluate. This inefficiency can be avoided noting redundancies from the fact that the differential of a minor is a linear combination of smaller minors and representing these entries as \emph{unevaluated} determinants, or subfunctions, (rather than sums of monomials).

This point is illustrated by the following. Suppose $A$ is a matrix of variables, and $\phi$ is the determinant function. Compute the Jacobian of $\phi$ in this case. We don't need to expand a determinant and then take derivatives in order to find an expression for the derivative $\frac{\partial \det A}{\partial a_{ij}}$. Instead use Laplace expansion on the $i$-th row,
\[
\det(A)=\sum_{j=1}^{n}a_{ij}C_{ij} \quad \implies \frac{\partial\det(A)}{\partial a_{ij}}=C_{ij}
\]
where $C_{ij}$ is the cofactor corresponding to the entry, which does not use the variable $a_{ij}$.
Now we can treat $C_{ij}$ as an unevaluated subfunction.
For sums of determinants this same general principle can be applied. Every entry of equation~\ref{eq:blockJacobian} has this format.

Another efficiency consideration is order of operations, particularly evaluation and minor determinants. Generally, it is better to compute the determinant of a numerical matrix instead of evaluating it determinant at a point.

Return to our example. The coordinate functions are (sums of) minors, and hence their partial derivatives are also (sums of) minors. Realizing this allows us to define the Jacobian with subfunctions that evaluate these minors rather than compute the minors as derivatives. Computing a vector of minors at a point allows one to make use of reductions like Gaussian elimination which speed the computation of determinants greatly (on the order of $n^3$ operations rather than $n!$).

To implement this idea, we wrote functions (essentially linear combinations of determinants) to populate the entries of the Jacobian, instead of relying on functions from \texttt{M2} like \texttt{diff} or \texttt{jacobian}.
To replace our use of \texttt{diff} we explicitly populated the Jacobian matrix utilizing appropriate subfunctions (cofactors) depending on the row and column labels. The Jacobian has column labels representing differentiation with respect to variables and row labels representing maximal minors. 

Specifically, we populate this matrix utilizing these rules: in row $m_i$ and column $x_{ij}$ we put a $0$ if maximal minor $A_{m_i}$ does not contain the variable $x_{ij}$, or we put the (numerical) determinant of the $A_{m_i}$ cofactor. This procedure, for each of the $s$ matrices, defines the Jacobian with respect to the collection of variables defined only by that individual matrix and not the variables defined by every one of the $s$ matrices. This directly produces the block structure (seen at \eqref{eq:blockJacobian}) of the Jacobian.
Then we add the numerical Jacobians of each of the matrices together and  calculate the rank. 

Here is an implementation of this strategy as a function that eats a matrix $M$ and spits out the column of the Jacobian of the Pl\"ucker map at $M$ corresponding to the differential with respect to variable $(i,j)$ for $M$.  
\begin{verbatim}
par = (j, s) -> (
c=0;
for i to (length(s) -1) do( 
    if j ==s_i then return i 
    else continue;); 
    return c);
\end{verbatim}
The function \texttt{par}  determines the sign of the cofactor. 
We loop over the subsets representing the maximal minors. The if-then statement determines whether or not the given minor contains the variable at $(i,j)$ and sends it to $0$ if it doesn't otherwise it evaluates the necessary numerical cofactor. 
We compute the full numerical Jacobian by using the function \texttt{dM} to populate the relevant non-zero columns.
\begin{verbatim}
t = set(0..k-1);
dM = (i,j,M)->apply(subsets(n,k),s->
   if not member(j,s) then 0 
   else (-1)^(par(j,s)+i)*
     det(submatrix(M,toList(t-set{i}),toList(s-set{j})))); 
\end{verbatim}

\begin{remark}
With these changes we notice the following differences in speed (on a laptop) for computing the dimension of $\sigma_2^1(\Gr(8,10))$: after $20$ minutes we force the code for the naive implementation to end with no answer, while the new code calculated the dimension in $.09$ seconds.
\end{remark}

\section{Dimensions of 1-Restricted Chordal Varieties}\label{sec:1restricted}
The main tool used to calculate the dimension of the secant variety is:

\begin{lemma}[Terracini\cite{Terracini}] \label{lemma:Terracini}
Suppose $X\subset \PP W$ is an algebraic variety and suppose $[x_{1}],\dots,[x_{s}]$ are smooth general points of $X$ such that $[x_{1}+\dots+ x_{s}]$ is a smooth general point of $\sigma_{s}(X)$. Then
\[
\widehat{T}_{x_{1}+\dots+ x_{s}}\sigma_{s}(X) =  \{ \widehat{T}_{x_{1}}X,\dots \widehat{T}_{x_{s}}X  \}
.\]
\end{lemma}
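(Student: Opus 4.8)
The plan is to recognize Terracini's lemma as a special case of the dimension-of-a-parametrized-variety setup recalled in Section~\ref{sec:dim}, applied to the most natural parametrization of $\sigma_s(X)$. Work on affine cones. Let $\widehat X\subset W$ be the cone over $X$ and consider the addition map
\[
\Phi\colon \widehat X\times\cdots\times\widehat X\to W,\qquad (y_1,\dots,y_s)\mapsto y_1+\cdots+y_s,
\]
with $s$ factors. By the definition of $\sigma_s(X)$ (equivalently, Definition~\ref{def:rrest} with $r=0$), the Zariski closure of the image of $\Phi$ is the cone $\widehat{\sigma_s(X)}$, so $\Phi$ is a parametrization of $\sigma_s(X)$ in the sense of Section~\ref{sec:dim}.

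Next I would compute the differential of $\Phi$ at a tuple $(x_1,\dots,x_s)$ of the kind in the hypothesis: each $[x_i]$ a smooth point of $X$, and $[x_1+\cdots+x_s]$ a smooth general point of $\sigma_s(X)$ — such tuples form a dense open subset of the source, so this is the generic situation. Since each $[x_i]$ is a smooth point, near $(x_1,\dots,x_s)$ the source is smooth with tangent space $\widehat{T}_{x_1}X\oplus\cdots\oplus\widehat{T}_{x_s}X$; indeed a curve in the product is an $s$-tuple of curves in $\widehat X$, so Definition~\ref{def:tgt} gives this description directly. As $\Phi$ is linear it equals its own differential at every point, whence
\[
d\Phi_{(x_1,\dots,x_s)}(\dot x_1,\dots,\dot x_s)=\dot x_1+\cdots+\dot x_s ,
\]
and therefore $\im\bigl(d\Phi_{(x_1,\dots,x_s)}\bigr)=\widehat{T}_{x_1}X+\cdots+\widehat{T}_{x_s}X=\{\widehat{T}_{x_1}X,\dots,\widehat{T}_{x_s}X\}$, which is precisely the right-hand side of the asserted identity.

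To conclude I would invoke the principle recalled in Section~\ref{sec:dim}: for a general point of the source of a parametrization $\phi$, the image of the differential is the cone over the tangent space of $\overline{\im\phi}$ at the image point (this is generic smoothness of dominant morphisms in characteristic zero). Applied to $\Phi$ at our general tuple this gives $\im\bigl(d\Phi_{(x_1,\dots,x_s)}\bigr)=\widehat{T}_{x_1+\cdots+x_s}\sigma_s(X)$, and combining with the previous paragraph finishes the proof. If one prefers an argument from first principles, the inclusion $\im d\Phi\subseteq \widehat{T}_{x_1+\cdots+x_s}\sigma_s(X)$ is immediate because $\Phi$ sends curves in the source to curves in $\widehat{\sigma_s(X)}$, and the reverse inclusion follows by a dimension count once one knows $\rank d\Phi$ equals $\dim\widehat{\sigma_s(X)}$ at a general point.

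The step requiring the most care is this last one: one must ensure the tuple is general enough that $d\Phi$ attains its generic rank, and that $[x_1+\cdots+x_s]$ is a smooth point of $\sigma_s(X)$ so that the latter's tangent space has the expected dimension — exactly the genericity and smoothness hypotheses built into the statement, and the reason they cannot be dropped. Everything else (linearity of $\Phi$, the tangent space of the product, the easy inclusion) is routine, and for the generic-smoothness input one may simply cite \cite{Terracini} or the discussion in Section~\ref{sec:dim}.
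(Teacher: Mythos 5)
Your proof is correct and is the standard argument: parametrize the cone over $\sigma_s(X)$ by the addition map on $\widehat X^{\times s}$, observe that the differential at $(x_1,\dots,x_s)$ has image $\widehat T_{x_1}X+\cdots+\widehat T_{x_s}X$, and invoke generic smoothness in characteristic zero to identify that image with $\widehat T_{x_1+\cdots+x_s}\sigma_s(X)$ at a general point. The paper does not actually prove this lemma (it is cited to Terracini), but your argument is precisely the approach the paper sets up in Section~\ref{sec:dim} (image of the differential of a parametrization at a general point equals the tangent space to the image) and the one echoed in the proof of Prop.~\ref{prop:3case}, where the authors say they ``mimic how one would prove Terracini's lemma'' by differentiating curves; your care about where the genericity and smoothness hypotheses enter is exactly right.
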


 There is a one-to-one correspondence between nonzero $k$-vectors $e_I:= e_{i_{1}}\wedge e_{i_{2}}\wedge\dots\wedge e_{i_{k}}$ and square-free monomials $e_{i_{1}}\dots e_{i_{k}}$, so we often omit the $\wedge$ symbols.  For shorthand, we write $\widehat{T}_{i_1,\ldots, i_k} :=\widehat{T_{e_{i_1},\ldots, e_{i_k}}\Gr(k,V)}$.

\subsection{The case of \texorpdfstring{$2$}{}-planes}\label{sec:2planes}
Asking for too much overlap causes collapsing, such as the following.
\begin{prop}
Suppose $r \geq k-1$ and $\dim(V)  = n\geq k.$ Then,
\begin{equation}
\sigma_s^{r}(\Gr(k,n)) = \Gr(k,n).
\end{equation}
\end{prop}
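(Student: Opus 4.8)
The plan is to prove the two set-theoretic inclusions $\sigma_s^r(\Gr(k,n)) \subseteq \Gr(k,n)$ and $\Gr(k,n)\subseteq \sigma_s^r(\Gr(k,n))$ separately. No tangent space or dimension count (hence no Terracini) is needed; the statement is an honest equality of varieties and follows from linear algebra together with the fact that $\Gr(k,n)$ is Zariski closed in $\PP\bw{k}V$. Throughout I use the standing convention $r\le k$ (otherwise the defining set is empty), so that $k-1\le r\le k$. For the easy inclusion: given $E\in\Gr(k,n)$, take the (degenerate) configuration $E_1=\cdots=E_s=E$ and $[\lambda]=[1:0:\cdots:0]\in\PP^{s-1}$. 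Its common intersection is $E$, of dimension $k\ge r$, and $\sum_i\lambda_i\widehat E_i=\widehat E$, so $\widehat E$ already lies in the set whose closure defines $\sigma_s^r(\Gr(k,n))$. Hence $\Gr(k,n)\subseteq\sigma_s^r(\Gr(k,n))$.

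For the main inclusion, since $\Gr(k,n)$ is closed it suffices to show that \emph{every} point $[\lambda_1\widehat E_1+\cdots+\lambda_s\widehat E_s]$ occurring in Definition~\ref{def:rrest} — i.e.\ with $\dim(\bigcap_i E_i)\ge r\ge k-1$ — already lies on $\Gr(k,n)$. Put $F=\bigcap_{i=1}^s E_i$. As each $E_i$ is $k$-dimensional and $\dim F\ge k-1$, only two cases arise. If $\dim F=k$ then $F=E_i$ for all $i$, the planes coincide, and $\sum_i\lambda_i\widehat E_i=(\sum_i\lambda_i)\widehat E_1$ is a scalar multiple of $\widehat E_1$, hence lies on $\widehat{\Gr}(k,n)$ (or is $0$). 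If $\dim F=k-1$, fix a basis $f_1,\dots,f_{k-1}$ of $F$, write $\widehat F=f_1\wedge\cdots\wedge f_{k-1}$, and use that $E_i\supseteq F$ with $\dim E_i=\dim F+1$ to write $E_i=F+\langle v_i\rangle$ and $\widehat E_i=\widehat F\wedge v_i$ for suitable $v_i\in V$. Then
\[
\sum_{i=1}^s\lambda_i\widehat E_i \;=\; \widehat F\wedge\Big(\sum_{i=1}^s\lambda_i v_i\Big)\;=\;\widehat F\wedge v,\qquad v:=\sum_{i=1}^s\lambda_i v_i,
\]
which is $0$ when $v\in F$ and otherwise equals $f_1\wedge\cdots\wedge f_{k-1}\wedge v$, the Pl\"ucker vector of the $k$-plane $F+\langle v\rangle$. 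In every case the point lies on $\Gr(k,n)$, proving $\sigma_s^r(\Gr(k,n))\subseteq\Gr(k,n)$.

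I do not expect a serious obstacle here; the only things requiring care are bookkeeping points. First, the sum $\sum_i\lambda_i\widehat E_i$ may vanish, so the set in Definition~\ref{def:rrest} is really the set of \emph{classes of nonzero} such sums; the computation above shows each nonzero sum is visibly a decomposable $k$-vector, so we never need a limiting argument beyond closedness of $\Gr(k,n)$, and in particular we do not invoke the general Pl\"ucker decomposability criterion. Second, one should record the hypothesis $r\le k$ so that the dichotomy $\dim F\in\{k-1,k\}$ is exhaustive (and note the degenerate case $r=k$, where all $E_i$ coincide, is subsumed in the first case). The conceptual content is simply that an $r$-fold overlap with $r\ge k-1$ forces every admissible sum to factor through a single wedge $\widehat F$ times one free vector, which is exactly a point of $\Gr(k,n)$.
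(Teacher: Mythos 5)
Your proof is correct and uses essentially the same idea as the paper: factor out the common $(k-1)$-dimensional intersection so that the sum collapses to $\widehat F\wedge\bigl(\sum_i\lambda_i v_i\bigr)$, a decomposable $k$-vector. You are somewhat more thorough than the paper's argument (which only factors a general point of the cone and leaves the reverse inclusion and the degenerate cases implicit), but the route is the same.
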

\begin{proof}
When $r \geq k$ then the proof is trivial.
Now consider the case $r= k-1$. 
An open subset of points in the cone $\widehat {\sigma_s^{k-1}(\Gr(k,V))}$ can be written as 
\[v_1\cdots v_k + v_1\cdots v_{k-1}v_{k+1}+ \cdots +  v_1\cdots v_{k-1}v_{k+s-1},\]
for  $v_i \in V$. This expression factors as 
\[v_1\cdots v_{k-1}v_{k+1} (v_k+\cdots v_{k+s-1}),\]
which is clearly an element of $\widehat {\Gr(k,V)}$. So, the result follows.
\end{proof}

\subsection{The case of \texorpdfstring{$3$}{}-planes}
The first non-trivial case of restricted secant varieties is that of the $1$-restricted chordal variety of $\Gr(3,V)$, with $\dim (V) \geq 5$.

\begin{prop} \label{prop:3case}
Consider $X = \Gr(3,V)$ with $\dim(V) =n\geq 5$. Then the following hold:
\begin{enumerate}
    \item $\widehat{T}_{e_{1} e_{2} e_{3} +e_{1} e_{4} e_{5} }\sigma_{2}^{1}(X) = 
	 V \cdot  \{ e_{2} e_{3} + e_{4} e_{5} ,e_{1} e_{2},e_{1} e_{3} ,e_{1} e_{4}, e_{1} e_{5}  \}
	 $, %(1)
	 \item $\widehat{T}_{e_{1} e_{2} e_{3} + e_{4} e_{5} e_{6} }\sigma_{2}(X) = 
\widehat{T}_{e_{1} e_{2} e_{3} +e_{1} e_{4} e_{5} }\sigma_{2}^{1}(X)  \oplus \{e_{i}e_{2}e_{3}-e_{i}e_{4}e_{5} \mid i\geq 6  \} \backslash ( \widehat{T}_{123}\cap \widehat{T}_{145})$,
\item and $\dim ( \sigma_2^1 (X)) = 5n-16 $ and $\sigma_2^1 (X)$ has codimension $n-1$ in $\sigma_2 (X)$. 
\end{enumerate}
\end{prop}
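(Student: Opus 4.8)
\emph{Plan.} The three parts chain together: (1) computes the tangent space to $\sigma_2^1(X)$ at a general point, from which $\dim\sigma_2^1(X)$ follows; (2) computes the tangent space to $\sigma_2(X)$ at a general point; and subtracting the two dimensions gives the codimension. I would do (1) first. Since \lemref{lemma:Terracini} is phrased for ordinary secants, I would differentiate the restricted parametrization directly: a neighbourhood of $[e_1e_2e_3+e_1e_4e_5]$ in $\widehat{\sigma_2^1(\Gr(3,V))}$ is the image of
\[(w_0,w_1,w_2,w_3,w_4,\lambda,\mu)\ \longmapsto\ \lambda\, w_0\wedge w_1\wedge w_2+\mu\, w_0\wedge w_3\wedge w_4,\]
the shared line being $\langle w_0\rangle$, evaluated at $(e_1,\dots,e_5,1,1)$. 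Differentiating, the $w_0$-direction gives $w\wedge(e_2e_3+e_4e_5)$ for $w\in V$ — this coupling is the crux, since moving the common line moves both planes at once — the $w_1,w_2$-directions give $V\cdot\{e_1e_3\}$ and $V\cdot\{e_1e_2\}$, the $w_3,w_4$-directions give $V\cdot\{e_1e_5\}$ and $V\cdot\{e_1e_4\}$, and the $\lambda,\mu$-directions give $e_1e_2e_3,e_1e_4e_5$, which already lie in the previous pieces. Collecting yields the claimed $V\cdot\{e_2e_3+e_4e_5,e_1e_2,e_1e_3,e_1e_4,e_1e_5\}$.

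To see this really is the tangent space, I would observe that for $n\ge5$ a general pair of $3$-planes with a common line has $E_1\cap E_2$ exactly a line and $E_1+E_2$ five-dimensional, hence is $\GL(V)$-equivalent to $(\langle e_1,e_2,e_3\rangle,\langle e_1,e_4,e_5\rangle)$, so the parametrized point is general on $\sigma_2^1(X)$ and the map has maximal rank there. The dimension of $\sigma_2^1(X)$ then comes from rewriting this tangent space as $\big(e_1\wedge V\wedge\langle e_2,e_3,e_4,e_5\rangle\big)+\big(V\wedge(e_2e_3+e_4e_5)\big)$: the first summand, spanned by the square-free monomials $e_1e_ae_b$ and $e_1e_ae_j$ for $a,b\in\{2,\dots,5\}$, $j\ge6$, has dimension $4n-14$; the second has dimension $n$, since $v\mapsto v\wedge(e_2e_3+e_4e_5)$ has kernel $\langle e_2,e_3\rangle\cap\langle e_4,e_5\rangle=0$; and the two summands meet exactly in the line spanned by the base point $e_1\wedge(e_2e_3+e_4e_5)$. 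So the cone tangent space has dimension $5n-15$ and $\dim\sigma_2^1(X)=5n-16$.

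For part (2), $e_1e_2e_3$ and $e_4e_5e_6$ are two general points of $\Gr(3,V)$ once $n\ge6$ (their planes meet only in $0$), so \lemref{lemma:Terracini} gives $\widehat{T}_{e_1e_2e_3+e_4e_5e_6}\sigma_2(X)=\widehat{T}_{123}+\widehat{T}_{456}$, and the sum is direct because $\widehat{T}_{123}$ and $\widehat{T}_{456}$ are coordinate subspaces with disjoint monomial supports (no $3$-set contains two of $\{1,2,3\}$ and two of $\{4,5,6\}$); its dimension is $2(3n-8)=6n-16$, whence $\dim\sigma_2(X)=6n-17$, $\sigma_2(\Gr(3,n))$ is non-defective for $n\ge6$, and $\codim_{\sigma_2(X)}\sigma_2^1(X)=(6n-17)-(5n-16)=n-1$. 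For the displayed decomposition of $\widehat{T}_{e_1e_2e_3+e_4e_5e_6}\sigma_2(X)$, I would take a monomial basis of $\widehat{T}_{123}\oplus\widehat{T}_{456}$ and sort its vectors against the spanning set from (1): the $e_1e_ae_b$, the $e_1e_ae_j$, and the monomials supported on $\{2,3,4,5\}$ already lie there; the coupled vectors $e_je_2e_3+e_je_4e_5$ do, but their antisymmetrizations $e_je_2e_3-e_je_4e_5$ ($j\ge6$) do not; and a final four-dimensional complement, matching $\widehat{T}_{123}\cap\widehat{T}_{145}=\langle e_1e_2e_4,e_1e_2e_5,e_1e_3e_4,e_1e_3e_5\rangle$ in dimension, is needed to exhaust the sum.

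The main obstacle is the bookkeeping in (2): tracking the signs produced by sorting wedge monomials, the several overlaps among $\widehat{T}_{123}$, $\widehat{T}_{456}$, $\widehat{T}_{145}$ and the tangent space from (1), and verifying that the listed pieces are independent and jointly span $\widehat{T}_{123}\oplus\widehat{T}_{456}$. A minor point to watch is the degenerate case $n=5$, where $\sigma_2^1(\Gr(3,5))=\sigma_2(\Gr(3,5))=\PP^9$, so the codimension claim is to be read for $n\ge6$.
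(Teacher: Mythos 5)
Your proposal is correct on all the substantive claims and follows essentially the same route as the paper: differentiate the (restricted) parametrization at the normal form $e_1e_2e_3+e_1e_4e_5$ to get (1), invoke Terracini and the transitivity of $\GL(V)$ on such configurations for genericity, and compare monomial counts for (3). The one genuine difference is in how the dimension $5n-16$ is obtained: you compute $\dim\widehat{T}\sigma_2^1(X)=(4n-14)+n-1=5n-15$ directly from the spanning set in (1), whereas the paper deduces it by subtracting the defect $4+(n-5)$ from $\dim\sigma_2(X)=6n-17$; your version is more self-contained and makes (3) independent of the bookkeeping in (2). One caveat on that bookkeeping: the "sorting" in your last paragraph should be carried out against $\widehat{T}_{123}+\widehat{T}_{145}$ (Terracini applied to $\sigma_2(X)$ at the \emph{same} point $e_1e_2e_3+e_1e_4e_5$), for which one really does have $\widehat{T}_{123}+\widehat{T}_{145}=\widehat{T}\sigma_2^1(X)\oplus\langle e_je_2e_3-e_je_4e_5\mid j\geq 6\rangle$, with the missing $4=\dim(\widehat{T}_{123}\cap\widehat{T}_{145})$ accounting for the gap to $\dim(\widehat{T}_{123}\oplus\widehat{T}_{456})$. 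Sorting against $\widehat{T}_{123}\oplus\widehat{T}_{456}$ itself cannot work literally, since for $n\geq 7$ the space in (1) contains $e_1\wedge e_4\wedge e_j$ ($j\geq 7$), which does not lie in $\widehat{T}_{123}\oplus\widehat{T}_{456}$, and conversely $\widehat{T}_{456}$ contains monomials such as $e_j\wedge e_4\wedge e_6$ not reached by your listed pieces; this wrinkle is present in the paper's own statement of (2), and only the dimension arithmetic $(5n-15)+(n-5)+4=6n-16$ is being asserted.
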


\begin{remark}
Note  that  $\widehat{T}_{123}\cap \widehat{T}_{145} = e_{1}\cdot \{ e_3 e_4,  e_2 e_4, e_3 e_5, e_2 e_5  \} = \CC^{4}$
and that $\dim ( \sigma_2^1 (X)) = 5n-16 = \dim (\sigma_2 (X)) - 4 - (n-5)$, where the ``4'' in the right-hand side signifies an extra intersection. It is also notable that the monomials in (1) correspond to the triangles in a square triangulated by adding a central point and all edges to that point.
\end{remark}

\begin{proof}
We mimic how one would prove Terracini's lemma. First we recall how to compute the cone over the tangent space to the Grassmannian.
Use  Def.~\ref{def:tgt} and construct a curve $\gamma(t)=e_{1}(t) e_{2}(t) e_{3}(t)$ such that $e_{i}(0)=e_{i}$ and let  $e_{i}'$ denote $e_{i}'(0)$, for $1\leq i\leq 3$. Then 
\[
\gamma(t)'_{| t=0} = e_{1}' e_{2} e_{3} + e_{1} e_{2}' e_{3} + e_{1} e_{2} e_{3}'.
\]
Since the vectors $e_{i}'$ are arbitrary in $V$, 
\[
\widehat{T}_{123} = V\cdot \{e_{1} e_{2},e_{1} e_{3} ,e_{2} e_{3}\} \cong \{e_1e_2e_3\} \oplus  \left(V/\{e_{1},e_{2},e_{3}\} \right) \cdot\{e_{1} e_{2},e_{1} e_{3} ,e_{2} e_{3} \} \cong \CC^{(n-3)3+1}
,\]
which agrees with the description given in \cite{CGG6_Grassmann}*{p~638}. The spaces $\widehat T_{145}$ and $\widehat T_{456}$ are similarly defined.
Now let $\gamma(t)=e_{1}(t) e_{2}(t) e_{3}(t) + e_{1}(t) e_{4}(t) e_{5}(t) = e_{1}(t) (e_{2}(t) e_{3}(t) +e_{4}(t) e_{5}(t))$. Then
\[
\gamma(t)'_{| t=0} = e_{1}'( e_{2} e_{3} +e_{4} e_{5}) + e_{1} e_{2}' e_{3} + e_{1} e_{2} e_{3}' +  e_{1} e_{4}' e_{5} + e_{1} e_{4} e_{5}'
,\] with $e_{i}(0)=e_{i}$ and 
$e_{i}'(0) = e_{i}'$  for $1\leq i\leq 5$.
Since $e_{i}'$ are arbitrary in $V$, we arrive at (1). 

For (2), note that by a similar calculation we have
\[
\widehat{T}_{e_{1} e_{2} e_{3} +e_{4} e_{5} e_{6} }\sigma_{2}(X) = 
	 V \cdot  \{ e_{1} e_{2}, e_{1} e_{3} ,e_{2} e_{3},e_{4} e_{5} ,e_{4} e_{6}, e_{5} e_{6}  \}.
\]
Compare the tangent spaces $\widehat{T}_{e_{1} e_{2} e_{3} +e_{1} e_{4} e_{5} }\sigma_{2}^{1}(X)$ and $\widehat{T}_{e_{1} e_{2} e_{3} +e_{4} e_{5} e_{6} }\sigma_{2}(X)$. The required overlap on $\sigma_{2}^1(X)$ forces elements of the form $\{e_{i}e_{2}e_{3}-e_{i}e_{4}e_{5} \mid i\geq 6  \}$ to be excluded. Therefore, 
\[
\widehat{T}_{e_{1} e_{2} e_{3} + e_{4} e_{5} e_{6} }\sigma_{2}(X) = 
\widehat{T}_{e_{1} e_{2} e_{3} +e_{1} e_{4} e_{5} }\sigma_{2}^{1}(X)  \oplus \{e_{i}e_{2}e_{3}-e_{i}e_{4}e_{5} \mid i\geq 6  \}.
\]

Now we prove (3). In the case $n=5$, one shows that a general point on $\sigma_{2}(\Gr(3,5))$ can be written (after a possible change of basis) as $[e_{1} e_{2} e_{3} + e_{1} e_{4} e_{5}]$, which is in $\sigma_{2}^{1}(\Gr(3,5))$, hence $\sigma_{2} (\Gr(3,5))=\sigma_{2}^{1} (\Gr(3,5))$. 

For $n\geq 6$, a general point of $\sigma_{2}(X)$ is (up to a change of basis) $e_{1} e_{2} e_{3} + e_{4} e_{5} e_{6}$. Therefore, we obtain orbit stability, and by Proposition~\ref{prop:dimfam} we get the dimension count $6k-17$.

So, by Terracini's lemma and Grassmann's formula
\[\widehat{T}_{e_{1} e_{2} e_{3} + e_{4} e_{5} e_{6}}\sigma_{2}(X)
=\widehat{T}_{123}+\widehat{T}_{456} = \{\widehat{T}_{123}\cup \widehat{T}_{456} \}- \{\widehat{T}_{123}\cap \widehat{T}_{456} \}=\{\widehat{T}_{123}\cup \widehat{T}_{456} \}
.\]

Now compare the two tangent spaces, and notice that 
\begin{multline}\label{eq:123456}
\widehat{T}_{e_{1} e_{2} e_{3} + e_{4} e_{5} e_{6}}\sigma_{2}(X)
=
\widehat{T}_{e_{1} e_{2} e_{3} + e_{1} e_{4} e_{5}}\sigma_{2}^{1}(X)\oplus
\{ e_i e_2 e_3 - e_i e_4 e_5 \mid i\geq 6  \}
\\
=\widehat{T}_{123}\oplus \widehat{T}_{456} = (\widehat{T}_{123}+\widehat{T}_{145}) \oplus
 \{ e_i e_2 e_3 - e_i e_4 e_5 \mid i\geq 6  \}
\\
=\widehat{T}_{123}\oplus \widehat{T}_{456} = ((\widehat{T}_{123}\cup \widehat{T}_{145})-(\widehat{T}_{123}\cap(\widehat{T}_{145}))) \oplus
 \{ e_i e_2 e_3 - e_i e_4 e_5 \mid i\geq 6  \}
.\end{multline}
So the formula for (3) follows by noting that the ``$-4$'' comes from (1) and the  ``$-(n-5)$'' comes from the complement on the right hand side of \eqref{eq:123456}. 
\end{proof}

Examples like these and computations done in \texttt{M2} led to the generalizations in Section~\ref{sec:rrestricted}.

\section{Expected Dimensions for \texorpdfstring{$r$}{}-restricted secant varieties}\label{sec:rrestricted}
Recall for varieties $X,Y \subset \PP V$ the \defi{abstract join} variety is 
\[
J(X,Y) = \overline{\left\{([x],[y],[p]) \mid p \in \text{span}\{x,y\} \right\}} \subset \PP V \times \PP V \times \PP V,
\]
where the overline denotes Zariski closure. The \defi{abstract $s$-secant variety} of $X$ is denoted $\Sigma_s(X)\subset (X)^{\times s}\times \PP V$ and can be constructed inductively as the $s$-fold join of $X$ with itself:
\[
\Sigma_s(X) = \overline{\left\{([x_1],[x_2],\dots,[x_s],[p]) \mid p \in \text{span}\{x_1,\dots,x_s\} \right\}} \subset \PP V^{\times s} \times \PP V.
\]
The \defi{embedded $s$-secant variety} is the projection to the last factor,  denoted $\sigma_s(X) \subset \PP V$.
The \defi{virtual dimension} of the $s$-secant variety is the dimension of the abstract $s$-secant variety:
\[
\text{v.dim}(\sigma_s(X)) = \dim (\Sigma_s(X) )=  s\cdot \dim(X)+s-1.
\]
The \defi{expected dimension} of $\sigma_s(X)$  is 
\[\exp.\dim (\sigma_s(X)) = 
\min\{\dim (\PP  V), \dim (\Sigma_s(X)) \}= \min\{\dim (\PP  V),  s\dim (X) +s-1\}. 
\]
Similarly, the abstract $r$-restricted $s$-secant variety is the incidence variety
\[
\mathcal{I} \subset \Gr(r,V) \times \Gr(k-r,V)^{\times s}\times \PP \bw{k} V,
\]
defined by 
\[\mathcal{I}:= \overline{\{(E, F_1,\ldots, F_s, [z]) \mid z\in  \text{span}\{ \widehat E\wedge \widehat F_1, \ldots, \widehat E \wedge \widehat F_s \} \}}.\] 
This incidence variety is natural as it mimics the way one might choose a point in $\sigma_s^r(\Gr(k,n))$. That is, select an $r$-plane for the overlap, then select the $s$  $(k-r)$-planes in the complement. Finally, select the $(s-1)$ points needed to define the secant variety.
However, what we say is ``expected'' should change based on how many $k$-planes we are trying to fit into a vector space $V$ with an $r$-dimensional overlap, and we handle this in several cases.

As the restricted secant variety depends on the intersection of $s$ linear spaces, Grassmann's formula calculates the size of the intersection of exactly two vector spaces. We apply this to the case of the restricted chordal variety below, where let $E \in \Gr(r,V)$ and $V/E$ to respectively denote the $r$-dimensional space and its quotient. 
\begin{remark}
Recall that the set of skew-symmetric matrices of rank $\leq r$ corresponds to the secant variety $\sigma_r (\Gr(2,V))$,  which is always defective.
\end{remark}

\begin{prop} \label{prop:2d-defect}
Let $n=k+2$ and $r =\max(r,2k-n)$. Then,
\[\edim(\sigma_2^{r}(\Gr(k,n)))=
\min\left\{\binom{n}{k} -1,r(n-r)+2((k-r)(n-k))+1\right\},
\] 

\[
\vdim(\sigma_2^{r}(\Gr(k,n)))=
\min\left\{\binom{n}{k} -1,r(n-r)+2((k-r)(n-k))-3\right\}. 
\]
Further $\sigma_2^{r+1}(\Gr(k,n)))=\Gr(k,n)$.
\end{prop}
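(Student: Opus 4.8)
The plan is to treat the three assertions in turn, the organizing observation being that for $n=k+2$ the generic intersection of two $k$-planes already has dimension $2k-n=k-2$. Thus after replacing $r$ by $\max(r,2k-n)$ the only substantive value is $r=k-2$, where the prescribed overlap imposes no genuine restriction and $\sigma_2^{k-2}(\Gr(k,k+2))$ coincides with the ordinary secant $\sigma_2(\Gr(k,k+2))$. The defect of $4$ encoded in the gap between the two formulas will be traced back to the classical defectivity of secants of $\Gr(2,\cdot)$, i.e.\ of the skew-symmetric rank loci recalled in the Remark above.

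For the \emph{expected dimension} I would read off the naive parameter count from the incidence variety $\mathcal{I}\subset \Gr(r,V)\times\Gr(k-r,V)^{\times 2}\times\PP\bw k V$. The overlap $E\in\Gr(r,V)$ contributes $r(n-r)$; each of the two $(k-r)$-planes enters only through its image in $V/E$ and so contributes $\dim\Gr(k-r,V/E)=(k-r)((n-r)-(k-r))=(k-r)(n-k)$; and the secant parameter $[\lambda]\in\PP^1$ contributes $1$. Summing and truncating at $\dim\PP\bw k V=\binom nk-1$ gives the first displayed formula. This step is the unwinding of the definition and is routine.

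For the \emph{virtual dimension} (the honest dimension of the abstract restricted secant, here equal to the actual dimension) the cleanest route uses the duality \eqref{eq:adual}. Since $r=k-2$ is the generic overlap, $\sigma_2^{k-2}(\Gr(k,k+2))=\sigma_2(\Gr(k,k+2))$, and as $n-k=2$ the duality identifies this with $\sigma_2(\Gr(2,k+2))$, the projectivised variety of skew-symmetric $(k+2)\times(k+2)$ matrices of rank at most $4$. Its dimension is the classical value $\binom{k+2}{2}-\binom{k-2}{2}-1=4k-3$, which is four less than the virtual count $r(n-r)+2(k-r)(n-k)+1=4k+1$; this gap is precisely the defectivity of secants of $\Gr(2,\cdot)$ recalled in the Remark. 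Equivalently, Theorem~\ref{thm:fiber} splits off the base $\Gr(k-2,k+2)$ of dimension $4k-8$ and leaves the fibre $\sigma_2(\Gr(2,4))=\PP^5$, giving the same total $4k-3$; truncating at $\binom nk-1$ yields the second formula. The main obstacle, and the only place real content enters, is this fibre/global dimension count: one must invoke the exact dimension $\binom m2-\binom{m-4}{2}-1$ of the skew rank-$\le 4$ locus (classical, but the crux), confirm that Theorem~\ref{thm:fiber} is genuinely birational at the valid overlap $r=k-2$ where the fibre $\sigma_2(\Gr(2,4))=\PP^5$ is nondegenerate, and carry out the bookkeeping of the two truncation regimes ($m\le 5$, where the locus fills its ambient, versus $m\ge 6$, where it is the Pfaffian locus of codimension one and higher).

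Finally, the assertion $\sigma_2^{r+1}(\Gr(k,n))=\Gr(k,n)$ is immediate: since $r\ge 2k-n=k-2$ we have $r+1\ge k-1$, and the earlier proposition gives $\sigma_s^{\rho}(\Gr(k,n))=\Gr(k,n)$ whenever $\rho\ge k-1$, applied with $\rho=r+1$.
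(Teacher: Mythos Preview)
Your proof is correct and lands on the same underlying mechanism as the paper---the defect of $4$ is traced to the known defectivity of $\sigma_2(\Gr(2,\cdot))$---but your primary route differs slightly. The paper works exclusively through the incidence/fibre picture $\Gr(r,V)\times\sigma_2(\Gr(k-r,V/E))$, observing that with $n=k+2$ and $r=k-2$ the fibre is the defective $\sigma_2(\Gr(2,4))$; the overall defect is then read off as the defect of that fibre. You instead lead with the global duality $\Gr(k,k+2)\cong\Gr(2,k+2)$, identifying $\sigma_2^{k-2}(\Gr(k,k+2))=\sigma_2(\Gr(k,k+2))$ directly with the rank-$\le 4$ skew-symmetric locus and quoting its classical dimension $4k-3$. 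This buys you a one-line global computation without needing to assemble base plus fibre, and avoids any appeal to the birationality of Theorem~\ref{thm:fiber} (which in the paper comes later). Your fibre-bundle alternative is exactly the paper's argument. You are also more explicit than the paper about the final claim $\sigma_2^{r+1}(\Gr(k,n))=\Gr(k,n)$, correctly reducing it to the earlier proposition on $\rho\ge k-1$.
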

\begin{proof}
Let $n=k+2$. This case handles spaces that have greater than a one-dimensional overlap $(2k-(k+2)=k-2)$. The corresponding incidence variety is composed of a secant of Grassmannian of lines $\sigma_{2}(Gr(2,V))$, which are known to be defective.  Redefine, if necessary, $r :=\max(r,2k-n)$. 
An isomorphic incidence variety to the one given in the proposition above has the form 
\[ \mathcal{I} \subset \Gr(r,V) \times \Gr(k-r,V/E)^{\times 2}\times \PP \bw{k} V.\] \

Let $N = \binom{n}{k}-1.$ The expected dimension is \\
\begin{equation}
\begin{aligned}
\edim(\sigma_2^{r} (\Gr(k,V)) ):= & \min\{ \dim(\mathcal{I}), N\} \\
= & \min\{ r(n-r) +(k-r)(n-(k-r))+1, N\}.
\end{aligned}
\end{equation}

Consider another representation of the same restricted chordal variety in the form \[ \mathcal{I} \subset \Gr(r,V) \times \sigma_2(\Gr(k-r,n-(k-r))).\] 

All secant varieties of lines are defective \cite{CGG6_Grassmann}. Therefore, as these restricted chordal varieties are composed of a Grassmannian and a point in $\PP \bw{k} V$ which have full dimension and one piece that is defective, namely the secant variety of lines, the restricted chordal variety is defective. 
Then, by direct calculation the actual dimension is the expected dimension minus one copy of $\Gr(k-r,n-(k-r))$. \
\end{proof}
\begin{example}
Consider $\sigma_2^1(\Gr(6,8))$. By Grassmann's formula any pair of $6$-dimensional subspaces of an 8-dimensional space has at least a $4$-dimensional intersection. Therefore $\sigma_2^{i}(\Gr(6,8))$ with $1 \leq i \leq 4$ are all equal. Those varieties have  dimension $21$ but from the incidence description the expected dimension would be $16+(4+4+1)=25$. This is exactly the known defect for $\sigma_s(Gr(2,V)$ which is $2s(s-1)$ or $4$ when $s=2$ \cite{CGG6_Grassmann}.
\end{example}

\begin{prop}
Let $k=r+2$. Then, $\sigma_2^{r}(\Gr(k,n))$ is defective, with defect
$2s(s-1)$.
\end{prop}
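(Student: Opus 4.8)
The plan is to read the statement off the fiber bundle structure of Theorem~\ref{thm:fiber}. Since $k = r+2$ we have $k-r = 2$, so that theorem presents $\sigma_2^r(\Gr(k,V))$ as birational to the total space $\Xi$ of a fiber bundle over $\Gr(r,V)$ whose fiber over $E$ is $\sigma_2(\Gr(2,V/E))$, the second secant variety of the Grassmannian of \emph{lines} in the $(n-r)$-dimensional quotient $V/E$. Taking the dimension of the bundle,
\[
\dim\sigma_2^r(\Gr(k,n)) \;=\; \dim\Gr(r,n) + \dim\sigma_2(\Gr(2,n-r)) \;=\; r(n-r) + \dim\sigma_2(\Gr(2,n-r)).
\]

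First I would record the parallel identity for the expected dimension. The incidence-variety count for $\sigma_2^r(\Gr(k,n))$ is $\dim\Gr(r,V) + 2\dim\Gr(2,n-r) + 1$, which is precisely $r(n-r)$ plus the virtual dimension of the fiber $\sigma_2(\Gr(2,n-r))$; after capping by $N = \binom n k - 1$ this gives
\[
\edim\sigma_2^r(\Gr(k,n)) \;=\; r(n-r) + \vdim\sigma_2(\Gr(2,n-r)),
\]
valid as soon as $n$ is large enough that $N$ does not clip the count, i.e.\ so long as $\sigma_2^r(\Gr(k,n))$ is a proper subvariety of $\PP\bw k V$.

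Next I would invoke the defectivity of secant varieties of Grassmannians of lines, recalled in the remark preceding Proposition~\ref{prop:2d-defect} and established in \cite{CGG6_Grassmann}: here $\sigma_2(\Gr(2,m))$ is the variety of skew-symmetric forms of rank $\le 4$, of dimension $\binom m 2 - \binom{m-4}{2} - 1$, and an elementary computation with binomial coefficients shows that $\vdim\sigma_2(\Gr(2,m)) - \dim\sigma_2(\Gr(2,m)) = 4 = 2s(s-1)$ for every $m$ in the relevant range. Subtracting the two displayed identities cancels the $r(n-r)$ terms, leaving
\[
\edim\sigma_2^r(\Gr(k,n)) - \dim\sigma_2^r(\Gr(k,n)) \;=\; \vdim\sigma_2(\Gr(2,n-r)) - \dim\sigma_2(\Gr(2,n-r)) \;=\; 2s(s-1),
\]
which is the claimed defect.

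The main obstacle is the bookkeeping on the range of $n$: the cancellation above needs $\PP\bw k V$ to be large enough that its dimension does not truncate the expected dimension before the defect of the fiber appears. For small $n$ this really can fail --- for instance $\sigma_2^1(\Gr(3,5)) = \sigma_2(\Gr(3,5))$ fills $\PP\bw 3\CC^5$ and is not defective at all --- so I would state the proposition under a hypothesis that keeps $N$ large enough (concretely $\binom n k - 1 \ge (r+4)(n-r) - 7$, which is automatic once $n$ exceeds an explicit $k$-dependent bound), and remark that the threshold case $n = k+2$ is exactly what Proposition~\ref{prop:2d-defect} records. If one preferred an argument not routed through Theorem~\ref{thm:fiber}, the alternative would be to imitate the Terracini-style tangent space computation of Proposition~\ref{prop:3case}: work at the general point $e_1\cdots e_{k-2}(e_{k-1}e_k + e_{k+1}e_{k+2})$ of $\sigma_2^{k-2}(\Gr(k,V))$, differentiate a curve respecting the shared factor $e_1\cdots e_{k-2}$ to obtain its tangent space, and compare with the tangent space of a general point of the unrestricted $\sigma_2(\Gr(k,n))$; but the fiber bundle argument is shorter and pinpoints the source of the defect.
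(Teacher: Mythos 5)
Your proof follows essentially the same route as the paper: both reduce via the fiber bundle / incidence variety of Theorem~\ref{thm:fiber} to the fiber $\sigma_2(\Gr(2,V/E))$, a secant variety of a Grassmannian of lines, and import its known defect $2s(s-1)=4$. Your version is in fact more careful than the paper's terse argument --- you make the dimension bookkeeping explicit and flag the range of $n$ for which the ambient $\PP\bw{k}V$ does not truncate the expected dimension, a hypothesis the paper's statement leaves implicit.
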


\begin{proof}
 Let $k=r+2$. Then, $\sigma_2^{r}(\Gr(k,n))$ is defective. Here the removal of the $r$-dimensional overlap leaves $\sigma_2(\Gr(k-r,V/E))$ and $k-r=2$ meaning it is also a secant variety of a Grassmannian of lines which is known to be defective. Construct the incidence variety as follows:
\[ \mathcal{I} \subset \Gr(r,V) \times \sigma_2(\Gr(k-r,V/E)).\]
 The incidence variety for this restricted chordal variety is also composed of a secant variety of lines which we know to be defective. The expected and virtual dimension counts are  then exactly the same as \ref{prop:2d-defect}, however $\sigma_2^{r+1}(\Gr(k,n)))\neq\Gr(k,n)$.
\end{proof}
\begin{prop}
Suppose $2k-1 \le n \leq k+2$ and $r =\max(r,2k-n)$. Then the virtual and expected dimensions for $\sigma_2^{r}(\Gr(k,n))$ are:
\[
\vdim(\sigma_2^{r} (\Gr(k,V)) ):= \dim(\mathcal{I}) = r(n-r) + 2 (k-r)(n-k) +1
,\]
and
\[
\edim(\sigma_2^{r}(\Gr(k,V)))= \min\left\{ \vdim(\sigma_2^{r} (\Gr(k,V)) ), \binom{n}{k} -1 \right\}.\]
\end{prop}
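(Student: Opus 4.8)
The plan is to compute $\dim(\mathcal{I})$ by slicing $\mathcal{I}$ over the base $\Gr(r,V)$, in the same spirit as the fiber bundle of Theorem~\ref{thm:fiber}, but retaining the \emph{abstract} secant variety in each fiber so that the count is unaffected by the defectivity of secants of smaller Grassmannians. Consider the projection $\pi\colon\mathcal{I}\to\Gr(r,V)$, $(E,F_1,F_2,[z])\mapsto E$. First I would observe that under the running hypothesis $2k-1\le n$ with $r=\max(r,2k-n)$ a general $r$-plane is realized as an intersection $E_1\cap E_2$ of two $k$-planes (this is precisely the content of $r\ge 2k-n$ via Grassmann's formula), so $\pi$ is dominant, indeed surjective, and since $\Gr(r,V)$ is irreducible it is dominated by a single component of $\mathcal{I}$.

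Next I would identify the general fiber of $\pi$. Fix $E$ and a splitting $V=E\oplus W$ with $W\cong V/E$ of dimension $n-r$. Wedging with the fixed decomposable vector $\widehat E$ defines a linear map $\widehat E\wedge(\cdot)\colon\bw{k-r}W\to\bw k V$ which is injective, hence carries the Pl\"ucker cone of $\Gr(k-r,W)$ isomorphically onto a copy of $\widehat{\Gr(k-r,V/E)}$ inside $\bw k V$. Away from the proper closed locus where some $F_i\cap E\neq 0$ (on which $\widehat E\wedge\widehat F_i=0$), a point of $\pi^{-1}(E)$ is then the same datum as a pair of points of $\Gr(k-r,V/E)$ in this Pl\"ucker embedding together with a point of their secant $\PP^1$; that is, $\pi^{-1}(E)$ is the abstract secant variety $\Sigma_2(\Gr(k-r,V/E))$. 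Since the projection of an abstract secant variety to its seed factors is dominant with generic fiber a $\PP^{s-1}$, $\dim\Sigma_2(Y)=2\dim Y+1$ for any $Y$, so $\dim\pi^{-1}(E)=2\dim\Gr(k-r,V/E)+1=2(k-r)(n-k)+1$, with no correction even when $k-r=2$.

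Combining the two, $\dim(\mathcal{I})=\dim\Gr(r,V)+\dim\pi^{-1}(E)=r(n-r)+2(k-r)(n-k)+1$, which is $\vdim(\sigma_2^r(\Gr(k,V)))$ by definition, establishing the first formula. The expected dimension then comes for free: $\sigma_2^r(\Gr(k,V))$ is the image of $\mathcal{I}$ under projection to the last factor $\PP\bw k V$, and a morphism cannot raise dimension above that of the ambient space, whose dimension is $\binom nk-1$; hence $\edim(\sigma_2^r(\Gr(k,V)))=\min\{\vdim(\sigma_2^r(\Gr(k,V))),\binom nk-1\}$, exactly as in the unrestricted definition of expected dimension.

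The hard part will be the identification of the general fiber in the second step: one must check that $\widehat E\wedge(\cdot)$ really is injective on $\bw{k-r}(V/E)$ so that nothing collapses in the fiber, that the bad locus $\{F_i\cap E\neq 0\}$ is genuinely a proper subvariety, and, most importantly, that it is the \emph{abstract} secant $\Sigma_2$ rather than the embedded $\sigma_2$ that occurs in $\mathcal{I}$ — so that the known defectivity of $\sigma_2(\Gr(2,\cdot))$, which is allowed here when $k-r=2$, alters the embedded restricted secant but not its virtual dimension. Everything else is bookkeeping with $\dim\Gr(a,b)=a(b-a)$ and the definitions of $\vdim$ and $\edim$; the same argument with $\PP^{s-1}$ in place of $\PP^1$ handles general $s$.
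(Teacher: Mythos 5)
Your proposal is correct and follows essentially the same route as the paper's (very brief) proof: a parameter count that chooses $E \in \Gr(r,V)$, then $F_1,F_2 \in \Gr(k-r,V/E)$, then a point on the spanned line, yielding $r(n-r)+2(k-r)(n-k)+1$. You merely package this count as a fibration over $\Gr(r,V)$ with fiber the abstract secant $\Sigma_2(\Gr(k-r,V/E))$ and supply the routine verifications (injectivity of $\widehat E\wedge(\cdot)$, abstract versus embedded secant) that the paper leaves implicit.
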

\begin{proof}
When $n \geq 2k-1$, count parameters in the following manner. First, choose $E\in \Gr(r,V)$, then choose $ F_1,F_2\in \Gr(k-r,V/E)$, and finally $z$ on the line $ \{ \widehat E \wedge \widehat F_1, \widehat E \wedge \widehat F_2 \} $. This gives the dimension counts listed in the proposition.
\end{proof}

The $r$-restricted chordal variety may also be defined as the following orbit closure
\[
\sigma^{r}_{2}(\Gr(k,V)) := \overline{
\GL(V).[
e_{1} e_{2} \dots e_{r}
 (e_{r+1} \dots e_{k} + e_{k+1} \dots e_{2k-r} )] }
,\]
which is equivalent to Def.~\ref{def:rrest}.
The dimension of $\sigma^{r}_{2}(\Gr(k,V)) $ is the dimension of the tangent space at a general point (i.e., on the orbit).

\begin{remark}
We also have a nice description of the tangent space of the restricted chordal variety using $\mathcal{I}$, that is it is the image of the tangent space to $\mathcal{I}$ under the projection:
\[
\widehat{T}_{E}\Gr(r,V) \times (\widehat{T}_{A}\Gr(k-r, V) + \widehat{T}_{B}\Gr(k-r, V)) \subset \bw{r} V \times \bw{k-r} V \subset \bw{r} V \otimes \bw{k-r} V 
\]
\[
\downarrow \pi
\]
\[
\bw{k}V
\]
\end{remark}

\section{Dimensions for  \texorpdfstring{$r$}{}-restricted secant varieties}\label{sec:dimr}

It turns out that restricted secant varieties are birational to a fiber bundle, which can, in turn, be used to understand their dimension. It may be possible to further exploit this connection like what was done in \cite{Landsberg-Weyman-Bull07}, which applied Weyman's Geometric Technique to a similar partial desingularization to obtain generators of the ideal.

\fiber*

\begin{proof}
Let $\Xi$ denote the fiber bundle in the statement of the theorem.
Recall the tautological sequence of bundles over the Grassmannian $\Gr(r,V)$:
\[
\xymatrix{
0 \ar[r]& \mathcal{S} \ar[r] & \underline{V} \ar[r]  & \mathcal{Q} \ar[r]& 0  
 }\]
where over a point $E\in \Gr(r,V)$ the fiber of the subspace bundle $\mathcal{S}$ is $E$, the fiber of the trivial bundle $\underline V$ is $V$ and the fiber of $\mathcal{Q}$ is $V/E$. Applying  the Schur functor $\bw{k-r}$ we obtain a vector bundle:
\[
\xymatrix{
\bw{k-r} Q \ar[d] \\ 
  \Gr(r,V)
}
\]
whose fiber over $E$ is $\bw{k-r}(V/E)$. In each fiber we have (a copy of) $\sigma_s(\Gr(k-r, V/E))$. We depict this in the following diagram.
\[
\xymatrix{
\sigma_s(\Gr(k-r , V/E)) \ar@{^{(}->}[r] \ar[dr]& \PP  \bw{k-r}V/E  \ar@{^{(}->}[r]  & \PP  \bw{k-r} Q \ar[d] \\ &  E \in &  \Gr(r,V)
}\]
The total space of the fiber bundle $\Xi$ consists of pairs $(E, [t])$ with $[t]\in \sigma_s(\Gr(k-r,V/E))$, and on an open subset we can assume that $t$ has rank at most $k$ (not just border rank $k$).
Select such a pair $(E, [t])$. 
For $E \in \Gr(r,V) \subset \PP \bw r V$ we write $E = [e_1\wedge \cdots \wedge e_r] $ for independent elements $e_i \in V$. Elements in an open subset of $ \sigma_s(\Gr(k-r,V/E))$ are of the form $[t] = [t^{(1)} + \cdots + t^{(k)}] $, with $[t^{(i)}]  = [a_1^{(i)}\wedge \cdots \wedge a_{k-r}^{(i)}] \in \Gr(k-r,V/E)$ for each $i$. 

Define a rational map $\Phi \colon \Xi  \dashrightarrow \sigma_s^r\Gr(k,V) $ via
\[
\Phi(E, [t]) = [e_1\wedge \cdots \wedge e_r\wedge t]
\]
on the open subset of points $(E,[t])$ in $\Xi$ such that $e_1\wedge \cdots \wedge e_r\wedge t$ is non-zero and $\rank t \leq k-r$.

The image is indeed in $\sigma_s^r(\Gr(k,V))$ since the collection $(\widehat E\wedge t^{(1)},\ldots, \widehat E\wedge t^{(s)})$  is a set of forms representing $k$-planes with (at least) an $r$-dimensional intersection.
This mapping is dominant because an open subset of points of $\sigma_s^r\Gr(k,V)$ have a representation as $[\widehat E\wedge t]$.

Now we describe a rational map $\Psi \colon \sigma_s^r(\Gr(k,V)) \dashrightarrow \Xi$. Choose a basis $\{v_1,\ldots,v_n\}$ of $V$ and  volume form $\Omega_V := v_1 \wedge \cdots \wedge v_n \in \bw n V$. This induces isomorphisms $\bw j V \to \bw {n-j} V^*$ via contraction (Hodge star) with $\Omega_V$. This mapping is graded in the following sense.
\begin{lemma}\label{lem:hodge}
Suppose $A,B$ are respectively vector spaces of dimensions $a,b$, and let $A\oplus B$ denote their external direct sum.
Let $\alpha \in \bw{i} A$ and $\beta \in \bw j B$.  Then $\alpha \wedge \beta \in \bw{i+j} (A\oplus B)$. Moreover,
\[
\Omega_{A\oplus B}(\alpha \wedge \beta) = 
(-1)^{i+j}
\Omega_A(\alpha) \wedge \Omega_B(\beta) 
,\]
 in $\bw{a-i} A^* \otimes \bw{b-j}B^* \subset \bw{a+b-(i+j)}(A \oplus B)^*$.
\end{lemma}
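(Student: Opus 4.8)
The statement is a compatibility/grading claim for the Hodge-star contraction operators, so the plan is to verify it on decomposable (simple) elements, where both sides are products of basis vectors and signs can be tracked explicitly, and then extend by bilinearity. First I would fix a basis: choose $a_1,\dots,a_a$ of $A$ and $b_1,\dots,b_b$ of $B$, so that $a_1,\dots,a_a,b_1,\dots,b_b$ is a basis of $A\oplus B$, and take the volume forms $\Omega_A = a_1\wedge\cdots\wedge a_a$, $\Omega_B = b_1\wedge\cdots\wedge b_b$, and $\Omega_{A\oplus B} = a_1\wedge\cdots\wedge a_a\wedge b_1\wedge\cdots\wedge b_b$ (this is the natural choice compatible with the direct-sum decomposition, and the sign in the statement is pinned to it). By multilinearity and antisymmetry it suffices to take $\alpha = a_I$ for a multi-index $I\subset\{1,\dots,a\}$ with $|I|=i$ and $\beta = b_J$ for $J\subset\{1,\dots,b\}$ with $|J|=j$.

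\textbf{Key computation.} For such simple elements the three contractions are monomials in the dual basis: $\Omega_A(a_I) = \sgn(I,I^\star)\, a^*_{I^\star}$, $\Omega_B(b_J) = \sgn(J,J^\star)\, b^*_{J^\star}$, and $\Omega_{A\oplus B}(a_I\wedge b_J) = \sgn(\sigma)\, a^*_{I^\star}\wedge b^*_{J^\star}$, where $\sigma$ is the permutation of $\{1,\dots,a+b\}$ sending the ordered tuple $(I, J\text{-shifted}, I^\star, J^\star\text{-shifted})$ to the standard order. So the entire claim reduces to the sign identity
\[
\sgn(\sigma) = (-1)^{i+j}\,\sgn(I,I^\star)\,\sgn(J,J^\star),
\]
which I would prove by decomposing $\sigma$ as a composition: first sort the $A$-indices $(I, I^\star)$ into standard order within the first $a$ slots (contributing $\sgn(I,I^\star)$) and the $B$-indices into standard order within the last $b$ slots (contributing $\sgn(J,J^\star)$), and then count the transpositions needed to move the block $J$ (of size $j$) leftward past the block $I^\star$ (of size $a-i$) to interleave correctly — this block transposition contributes $(-1)^{j(a-i)}$. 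A short parity bookkeeping then has to reconcile $(-1)^{j(a-i)}$ with the asserted $(-1)^{i+j}$; I expect that once the convention for which factor $\Omega$ contracts ``from the left'' is fixed consistently with \eqref{eq:adual}, the exponents agree modulo $2$ (possibly after also accounting for a $(-1)^{i(b-j)}$ or $(-1)^{(a-i)(b-j)}$ term from reordering the dual side $a^*_{I^\star}\wedge b^*_{J^\star}$ versus $b^*_{J^\star}\wedge a^*_{I^\star}$).

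\textbf{Main obstacle.} The genuine difficulty is purely combinatorial sign-chasing: getting the permutation $\sigma$ decomposed correctly and matching its parity to $(-1)^{i+j}$ under the paper's contraction convention. Because the sign depends delicately on whether one contracts $\Omega_V$ against an element to produce something in $\bw{n-j}V^*$ by inserting on the left or the right, I would be careful to state explicitly (or recall from the discussion around \eqref{eq:adual}) the precise convention $\delta(e_I) = \sgn(I,I^\star)e_{I^\star}$ and then keep that fixed throughout. Once the decomposable case is settled, the general case follows immediately since all three operations $\alpha\mapsto\Omega_A(\alpha)$, $\beta\mapsto\Omega_B(\beta)$, $\gamma\mapsto\Omega_{A\oplus B}(\gamma)$ are linear and $\wedge$ is bilinear, so both sides of the claimed identity are bilinear in $(\alpha,\beta)$ and agree on a spanning set of decomposables.
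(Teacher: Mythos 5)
Your overall strategy is the same as the paper's: reduce to decomposable $\alpha,\beta$ by bilinearity, work in an adapted basis, and compute the sign of the permutation relating $\Omega_{A\oplus B}(\alpha\wedge\beta)$ to $\Omega_A(\alpha)\wedge\Omega_B(\beta)$. Up to the point where you identify the block-transposition sign $(-1)^{j(a-i)}$, your computation is correct and in fact more careful than the paper's, which simply asserts that ``the sign of the permutation that passes the $a_i$'s through the $b_j$'s'' is $(-1)^{i+j}$.

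The genuine gap is the last step, which you leave as ``I expect that \dots the exponents agree modulo $2$.'' They do not. Under the convention stated in the paper, $\delta(e_I)=\sgn(I,I^\star)e_{I^\star}$, your own bookkeeping gives $\sgn(K,K^\star)=(-1)^{j(a-i)}\sgn(I,I^\star)\sgn(J,J^\star)$ for $K=I\cup(J+a)$, and there is no further correction term: $a^*_{I^\star}\wedge b^*_{J^\star}$ is already in the standard order for $K^\star$, so no $(-1)^{i(b-j)}$ or $(-1)^{(a-i)(b-j)}$ factor appears. Since $j(a-i)\not\equiv i+j \pmod 2$ in general (take $a=b=1$, $i=1$, $j=0$: then $\Omega_{A\oplus B}(a_1)=+b_1^*$ while $(-1)^{i+j}\Omega_A(a_1)\wedge\Omega_B(1)=-b_1^*$; or $a=2$, $b=1$, $i=j=1$: $\Omega_{A\oplus B}(a_1\wedge b_1)=-a_2^*$ versus $+a_2^*$), the reconciliation you defer cannot be carried out. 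The correct sign with this convention is $(-1)^{j(a-i)}$, so the discrepancy lies in the statement (and in the paper's own one-line sign claim), not in your permutation decomposition. You should either prove the identity with the corrected exponent $j(a-i)$, or note explicitly that the sign is immaterial for the intended use: in the proof of Theorem~\ref{thm:fiber} the lemma is applied projectively and the scalar $\Omega_E(e_1\wedge\cdots\wedge e_r)$ is normalized to $1$, so only the statement up to a nonzero scalar is needed. As written, your proof stops exactly at the step that would reveal the problem, so it is incomplete.
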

\begin{proof} Since the mappings $\Omega_{A\oplus B}, \Omega_A, \Omega_B$ are all linear, it suffices to prove the statement on rank-one elements,  $\alpha=a_1 \wedge \cdots \wedge a_i$ and $\beta= b_1 \wedge \cdots \wedge b_j$.
We may choose an adapted basis $\{a_1,\ldots, a_a, b_1,\ldots,b_b\}$ of $A\oplus B$ so that the first $a$ vectors come from $A$ and the next $b$ vectors come from $B$. Moreover, we can select the first $i$ vectors from the terms of $\alpha$, and extend to a basis of $A$ to obtain the  next $a-i$ vectors. Similarly, for the last $b$ be choose a basis of $B$ starting from the terms of $\beta$. We also choose a dual basis $\{a^1\cdots a^a,b^1 \cdots b^b\}$ of $(A\oplus B)^*$. 
 Now apply the contraction operator to  $\alpha \wedge \beta=a_1\wedge\cdots \wedge a_i \wedge b_1\wedge \cdots \wedge b_j$:
\[
\Omega_{A\oplus B}(\alpha \wedge \beta)=(-1)^{i+j} \times a^1\wedge\cdots \wedge a^{a-i} \wedge b^1\wedge \cdots \wedge b^{b-j}.  
\] 
where $(-1)^{i+j}$ defines the sign of the permutation that passes the $a_i$'s through the $b_j$'s to get it in the form $a^1\wedge\cdots \wedge a^{a-i} \wedge b^1\wedge \cdots \wedge b^{b-j}.$
Then, as $\Omega_{A}(\alpha)=a^1\wedge\cdots \wedge a^{a-i}$ and $\Omega_B(\beta)=b^1\wedge \cdots \wedge b^{b-j}$, substituting into the right-hand side yields:
\[
\Omega_{A\oplus B}(\alpha \wedge \beta) = 
(-1)^{i+j}
\Omega_A(\alpha) \wedge \Omega_B(\beta) 
.\] One checks that the result is independent of the choice of bases of $A$ and $ B$.
\end{proof}

Now let $[w] \in \sigma_s^r(\Gr(k,n))$ be a general point, so that 
\[
w = \sum_{i=1}^s e_1^{(i)}\wedge\cdots \wedge e_k^{(i)},
\]
with $E_i = [e_1^{(i)}\wedge\cdots \wedge e_k^{(i)}] \in \Gr(k,n)$ for each $i$, and with $\cap_i E_i = E$ an $r$-dimensional subspace of $V$. 
More explicitly, let $\pi$ denote the projection from the abstract secant variety. 
General points are selected from the complement of the following closed subset:
\[\{\pi(E_1, \ldots, E_s, [w]) \mid \rank(E_i)< k \text{ for some }i  \text{ or } \dim (\cap_i E_i) <r  \}.\]
We wish to find an expression (after a possible change of basis) like
\[w =
    e_1\wedge \cdots \wedge e_r\wedge(a^{(1)}_{1}\wedge \cdots \wedge a^{(1)}_{k-r}) +
        \cdots 
   + e_1\wedge \cdots \wedge e_r\wedge(a^{(s)}_{1}\wedge \cdots \wedge a^{(s)}_{k-r}) 
,\]
which factors as 
\[w =
    e_1\wedge \cdots \wedge e_r\wedge\left(a^{(1)}_{1}\wedge \cdots \wedge a^{(1)}_{k-r} +
        \cdots 
   +a^{(s)}_{1}\wedge \cdots \wedge a^{(s)}_{k-r} \right) 
,\]
and hence can be readily seen to be an element in $\bw r E \otimes \bw{k-r} V/E$. 
If we can do this, then the mapping from such a point to $\Xi$ will be clear. 

Apply $\Omega_V$ to this expression for $w$ to obtain (via  Lemma~\ref{lem:hodge})
\[\Omega_V(w) =
    \Omega_{E}(e_1\wedge \cdots \wedge e_r)\cdot \Omega_{V/E}(\left(a^{(1)}_{1}\wedge \cdots \wedge a^{(1)}_{k-r} +
        \cdots 
   +a^{(s)}_{1}\wedge \cdots \wedge a^{(s)}_{k-r} \right) 
.\]
 We can take the scalar factor $\Omega_{E}(e_1\wedge \cdots \wedge e_r) $ to be equal to $1$  so that
\[\Omega_V(w) =
    \Omega_{V/E} \left(a^{(1)}_{1}\wedge \cdots \wedge a^{(1)}_{k-r} +
        \cdots 
   +a^{(s)}_{1}\wedge \cdots \wedge a^{(s)}_{k-r} \right) 
,\]
and by construction the summands in $\Omega_V(w)$ live in $\bw{n-r} V/E$.  Moreover, 
\[
[\Omega_V(w)] \in \sigma_s(\Gr(n-r, V/E)).
\]
Note that $\Omega_V(w) \in \bw{n-r} V/E$ in particular.
Consequently, one can find $E$ from $\Omega_V (w)$ as the annihilator in the dual of the kernel of the 1-flattening defined for  $T \in \bw{n-r} V^*$ as
\[
F_T \colon V \to \bw{n-r-1}V^*
\]
applied to $T = \Omega_V(w)$. Once $E = \ker F_{\Omega_V(w)}$ is found, one can find an expression for $[t] \in \sigma_k(\Gr(n-k, V/E))$ by applying the projection operator $\Omega_{V/E}$ to $\Omega_{V}(w)$.

This process gives a method for producing from $[w] \in \sigma^r_s\Gr(k,V)$ a pair $(E, [t]) \in \Xi$. In particular $\Psi([w]) \mapsto (E,[\Omega_{V/E}(\Omega_V(w))]) $, with $E = \ker F_{\Omega_V(w)}$. 
By construction the composition of these two mappings is the identity on the open sets where they are defined.
\end{proof}

The description of the restricted secant varieties suggests the following regarding the minimal defining equations of the ideals of secants of restricted secant varieties, which was studied in the case of usual secants by one of us \cite{daleo2016computations}.
\begin{conj}
Consider $X = \sigma_s^r(\Gr(k,n))$ with parameters $s,r,k,n$ so that $X$ is non-trivial. Then the ideal of $X$ is generated by two types of polynomials:
\begin{enumerate}
    \item polynomials inherited from the ideal of $\sigma_s(\Gr(k-r,n-r) )$, i.e. the polynomials coming from the condition that $\Omega(w) \in \sigma_s(\Gr(k-r,n-r) )$ for $w \in \sigma_s^r(\Gr(k,n))$.
    \item polynomials coming from the the $(r+1) \times (r+1)$ minors of the 1-flattening $F_T \colon V \to \bw{n-r-1}$ for $T = \Omega(w)$. 
\end{enumerate}
\end{conj}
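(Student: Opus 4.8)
The plan is to prove $J = I(X)$ for $X = \sigma_{s}^{r}(\Gr(k,n))$, where $J = J_{2} + J_{1}$ is the ideal generated by the two proposed families: $J_{2}$ the stated minors of the $1$-flattening $F_{\Omega(w)}$, and $J_{1}$ the functions obtained by substituting $\Omega(w)$ into the defining equations of $\sigma_{s}(\Gr(k-r,n-r))$. I would first establish the inclusion $J \subseteq I(X)$ (the equations are valid on $X$), and then the reverse inclusion $I(X) \subseteq J$ (they generate); the latter is where I expect the real difficulty.

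\emph{Validity of the equations.} It is enough to check vanishing on the dense open subset of $X$ parametrized as in the proof of \thmref{thm:fiber} — points $[w] = [\widehat E \wedge t]$ with $E \in \Gr(r,V)$ and $[t]$ a point of $\sigma_{s}(\Gr(k-r,V/E))$ of rank (not just border rank) at most $s$ — since Zariski closure then spreads the vanishing to all of $X$. For $J_{2}$: because $e \wedge w = 0$ for every $e \in E$, Hodge duality (\lemref{lem:hodge}) puts $E$ inside the kernel of $F_{\Omega(w)}$, so this flattening is rank-deficient in the prescribed way and the family-$(2)$ minors vanish. For $J_{1}$: the rational inverse $\Psi$ built in the proof of \thmref{thm:fiber} recovers $E = \ker F_{\Omega(w)}$ and then outputs the point $[\Omega_{V/E}(\Omega_{V}(w))] \in \sigma_{s}(\Gr(k-r,V/E))$, so any equation of $\sigma_{s}(\Gr(k-r,n-r))$, pulled back along the reduction $w \mapsto \Omega_{V/E}(\Omega_{V}(w))$, vanishes on the open set. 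The one technical point here is to see that this pulled-back function is a genuine polynomial in the coordinates of $w$ rather than merely a rational one; I would handle this by localizing at the nonvanishing maximal minors of $F_{\Omega(w)}$ — using $J_{2}$ to present $E$ — clearing denominators and saturating, or equivalently by running the computation on the incidence variety $\mathcal{I}$ and pushing forward.

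\emph{Generation.} I would organize this along the fibration of \thmref{thm:fiber}. Let $\mathcal{R} \subseteq \PP \bw{k}V$ be the closure of $\{[\widehat E \wedge u] : E \in \Gr(r,V),\ u \in \bw{k-r}(V/E)\}$, the variety of $k$-forms divisible by an $r$-form, so that $X \subseteq \mathcal{R}$. Step one is to show $V(J_{2}) = \mathcal{R}$ as schemes; set-theoretically this is immediate (a form killed by $F_{\Omega(w)}$ to the prescribed rank has $\dim\ker F_{\Omega(w)} \ge r$, hence is divisible by any $r$-form supported on that kernel), and the scheme-theoretic refinement is the more tractable ``structural'' half, classical for $r = 1$. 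By \thmref{thm:fiber}, run with the fiber enlarged to all of $\PP\bw{k-r}(V/E)$, $\mathcal{R}$ is birational to the total space of $\bw{k-r}\mathcal{Q}$ over $\Gr(r,V)$. Step two is to cut $X$ out of $\mathcal{R}$ by the fiberwise condition that the class in $\bw{k-r}(V/E)$ lie in $\sigma_{s}(\Gr(k-r,V/E))$, and to show $J_{1}$ generates $I(X)$ modulo $I(\mathcal{R})$. Here the natural engine is Weyman's geometric technique applied to the resolution $Z \to X$, with $Z$ the fiberwise cone over $\sigma_{s}(\Gr(k-r,V/E))$ inside $\bw{k-r}\mathcal{Q}$, exactly in the spirit of \cite{Landsberg-Weyman-Bull07}: one computes a minimal free resolution of $\CC[X]$ from the cohomology on $\Gr(r,V)$ of exterior powers of the relevant tautological bundles twisted by the relative secant sheaf, and reads off the generators of $I(X)$. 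This needs, as input, a description of $I(\sigma_{s}(\Gr(k-r,n-r)))$, so — in the spirit of \corref{thm:classification} — the most honest statement may be one conditional on the corresponding conjecture for ordinary secants of Grassmannians (cf.~\cite{daleo2016computations}).

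\emph{The main obstacle.} The hard part is the generation step — ruling out extra generators. Even granting the ideal of the smaller secant, the geometric-technique computation must control possibly many nonzero cohomology groups on $\Gr(r,V)$ via Bott vanishing, and must cope with $\Phi$ being only birational (so $X$ may fail to be normal — one must track the gap between $\CC[X]$ and $\Gamma(Z,\O_{Z})$ and check that $R^{>0}\Phi_{*}\O_{Z}$ does not contribute in the relevant degrees) and with the fiberwise secant cone being itself singular. A reasonable intermediate target is the purely set-theoretic equality $V(J) = X$: for $w \in V(J_{2})$ one recovers the candidate $r$-plane $E = \ker F_{\Omega(w)}$, the family-$(1)$ equations then force the residual class in $\bw{k-r}(V/E)$ into $\sigma_{s}(\Gr(k-r,V/E))$, and hence $[w] \in X$; the scheme-theoretic and ideal-theoretic refinements would follow only afterward.
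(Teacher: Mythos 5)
This statement is a conjecture in the paper: the authors give no proof, so there is nothing to compare your argument against, and the only question is whether your proposal actually closes the problem. It does not. The ``generation'' half, which you correctly identify as the real content, is left entirely as a program: it requires as input the (unknown in general) ideal of $\sigma_s(\Gr(k-r,n-r))$, a Weyman-style cohomology computation on $\Gr(r,V)$ that you do not carry out, and control of normality and higher direct images that you only flag as obstacles. As written, your text establishes at most a plausible strategy together with a sketch of a set-theoretic containment, not the ideal-theoretic equality the conjecture asserts.

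There is also a concrete gap in the half you present as routine. Having $E\subseteq \ker F_{\Omega(w)}$ with $\dim E = r$ bounds the rank of the flattening by $n-r$, i.e.\ it forces the vanishing of the $(n-r+1)\times(n-r+1)$ minors, not of the $(r+1)\times(r+1)$ minors named in family (2); generically $\rank F_{\Omega(w)} = n-r > r$, so the $(r+1)$-minors do \emph{not} vanish on $X$ at all. Either family (2) must be reinterpreted --- e.g.\ as the minors cutting out the locus where the kernel is at least $r$-dimensional, or as minors of a flattening of $w$ itself rather than of $\Omega(w)$ --- or your claim that ``the family-(2) minors vanish'' fails. This ambiguity is arguably inherited from the conjecture's own loose indexing ($\Omega(w)$ lies in $\bw{n-k}V^*$, not $\bw{n-r}V^*$), but any proof attempt must resolve it before even the inclusion $J\subseteq I(X)$ can be checked; the same issue propagates to your set-theoretic argument that $V(J_2)=\mathcal{R}$, where you again read the $(r+1)$-minors as forcing an $r$-dimensional kernel.
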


A conjecturally complete list (from \cite{BaurDraismadeGraaf} ) of known defective secant varieties of Grassmannians can be found at  Table~\ref{tab:defective}. We can combine the considerations above with the BDdG-Conjecture \cite{BaurDraismadeGraaf} to say that the defectivity of $r$-restricted higher order secant varieties only depends on the usual notion of $k$-defectivity of secant of Grassmannians.

\classification*

\begin{proof}
Let $\sigma_s^{r}(\Gr(k,V))$ be the $r$-restricted $s$-secant variety and define the corresponding incidence variety $\mathcal{I} \subset \Gr(r, V) \times \sigma_s(\Gr(k-r, V/E))$. 
We showed in Theorem~\ref{thm:fiber} that the restricted secant is bi-rational to this incidence variety, and its dimension is completely determined by the dimension of the usual secant variety.
 Therefore, any defect must come from $\sigma_s(\Gr(k-r, V/E))$. The current list of known defective cases are exactly those in the BDdG conjecture.
\end{proof}
\begin{table}[t!]
\captionsetup{width=\textwidth}

\renewcommand{\arraystretch}{1.1}
\centering
 \begin{tabular}{||c |c| c| c||} 
 \hline
   Secant Variety & actual codimension & expected codimension \\ [0.75ex] 
 \hline\hline
 $\sigma_s(\Gr(2,n))$ & $2s(s-1)$ & 0 \\
$\sigma_3(\Gr(3,7))$ & 1 & 0 \\
$\sigma_3(\Gr(4,8))$ & 20 & 19 \\
$\sigma_4(\Gr(4,8))$ & 6 & 2 \\
$\sigma_4(\Gr(3,9))$ & 10 & 8 \\
 [0.5ex] 
 \hline
 \end{tabular}
\caption{The conjecturally complete list of defectivity for secants of Grassmannians \cite{BaurDraismadeGraaf}.
}\label{tab:defective}
\end{table}

The following is the special case of Corollary~\ref{thm:classification} for $r$-restricted chordal variety.

\begin{prop}
\label{prop:chordaldim}
The projection from the incidence variety 
\[\mathcal{I} \subset \Gr(r, V) \times \sigma_2(\Gr(k-r, V/E)) \to \PP( \bw k V),\] whose image is $\sigma_2^{r}(\Gr(k,V))$, has finite fibers. 
Hence given the BDdG conjecture $\sigma_2^{r}(\Gr(k,V))$ has no additional defect other than the defect coming from  (usual) secant varieties of Grassmannians.
The only defective restricted chordal varieties of Grassmannians are when $n=k+2$ or when $k-r = 2$. 
\end{prop}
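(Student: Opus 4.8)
<br>

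The plan is to prove Proposition~\ref{prop:chordaldim} in three parts, mirroring its three assertions. First I would establish that the projection $\pi\colon\mathcal{I}\to\PP(\bw k V)$ has finite fibers. Using the reconstruction procedure from the proof of Theorem~\ref{thm:fiber}: given a general $[w]\in\sigma_2^{r}(\Gr(k,V))$, the subspace $E$ is recovered uniquely as $\ker F_{\Omega_V(w)}$ (the kernel of the $1$-flattening of the Hodge dual), and once $E$ is determined the element $[t]=[\Omega_{V/E}(\Omega_V(w))]\in\sigma_2(\Gr(k-r,V/E))$ is determined as well. So over the open set where $\Psi$ is defined the fiber is a single point; to get \emph{finite} fibers on all of $\sigma_2^{r}(\Gr(k,V))$ one uses upper semicontinuity of fiber dimension together with the fact that $\dim\mathcal{I}=\dim\sigma_2^{r}(\Gr(k,V))$ (which is exactly what Theorem~\ref{thm:fiber} gives, since birational maps are generically one-to-one). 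I would spell out that $\mathcal{I}$ is irreducible (it fibers over the irreducible $\Gr(r,V)$ with irreducible fibers $\sigma_2(\Gr(k-r,V/E))$), so a dominant morphism to an image of the same dimension has generically finite, hence (by properness of $\mathcal{I}$ over $\PP(\bw k V)$ after taking closure) finite fibers.

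Second, the "no additional defect" claim is essentially immediate from the finite-fiber statement combined with Corollary~\ref{thm:classification}: since $\dim\sigma_2^{r}(\Gr(k,V))=\dim\mathcal{I}=\dim\Gr(r,V)+\dim\sigma_2(\Gr(k-r,V/E))=r(n-r)+\dim\sigma_2(\Gr(k-r,n-r))$, any discrepancy between this and the virtual dimension $r(n-r)+2(k-r)(n-k)+1$ must be accounted for entirely by the defect of the ordinary secant $\sigma_2(\Gr(k-r,n-r))$. I would just quote Corollary~\ref{thm:classification} here (it is the $s=2$ instance), so this paragraph is short.

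Third, for the classification of which $\sigma_2^{r}(\Gr(k,V))$ are defective, I would invoke Table~\ref{tab:defective}: the only defective ordinary secant varieties of the shape $\sigma_2(\Gr(m,N))$ are the Grassmannians of lines $\sigma_2(\Gr(2,N))$, which are always defective with defect $2s(s-1)=4$. Setting $m=k-r$ and $N=n-r$, the relevant secant $\sigma_2(\Gr(k-r,n-r))$ is defective precisely when $k-r=2$ (a Grassmannian of lines) or when $n-r-(k-r)=2$, i.e. $n-k=2$, which is the case $n=k+2$ handled in Proposition~\ref{prop:2d-defect} (where the required overlap forces $r=\max(r,2k-n)$ and again produces a secant of a Grassmannian of lines via the duality $\Gr(k-r,n-r)\cong\Gr(2,n-r)$). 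By the BDdG conjecture this list of defective ordinary secants is complete, so these are the only two families, proving the last sentence.

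The main obstacle is the first part: making the finite-fiber argument rigorous rather than merely generically-one-to-one. The subtle point is that $\Psi$ is only a rational map, so I must argue that the fiber dimension does not jump on the boundary — for this I would use that $\mathcal{I}$, being a closed incidence variety inside $\Gr(r,V)\times\Gr(k-r,V)^{\times 2}\times\PP\bw k V$, is projective, hence $\pi$ is proper; a proper dominant morphism between irreducible varieties of equal dimension has all fibers of dimension zero, hence finite. I would also need to double-check that the two exceptional families $n=k+2$ and $k-r=2$ genuinely overlap correctly with the hypotheses (e.g.\ that after the normalization $r=\max(r,2k-n)$ the incidence description still applies), which is bookkeeping but worth stating carefully.
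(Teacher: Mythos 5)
Your overall route is the same as the paper's: Proposition~\ref{prop:chordaldim} is presented there as the $s=2$ instance of Theorem~\ref{thm:fiber} together with Corollary~\ref{thm:classification}, and the identification of the defective families via Table~\ref{tab:defective} (namely $k-r=2$ directly, and $n=k+2$ via the duality $\Gr(k-r,n-r)\isom\Gr(n-k,n-r)$) is exactly the intended bookkeeping. Your second and third paragraphs are fine.

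The genuine gap is in your first step, where you try to upgrade ``generically finite'' to ``all fibers finite.'' The claim that a proper dominant morphism between irreducible varieties of equal dimension has all fibers of dimension zero is false: the blow-up of a point in $\PP^2$ is proper, birational, and equidimensional, yet has a fiber isomorphic to $\PP^1$. Semicontinuity of fiber dimension runs the wrong way for you (the locus of large fibers is \emph{closed}, so nothing prevents jumps on a proper closed subset). And in fact the jump really happens here: over a decomposable point $[\widehat E\wedge \widehat F]\in\Gr(k,V)\subset\sigma_2^{r}(\Gr(k,V))$ the incidence variety contains all tuples $(E,F,F_2,[\widehat E\wedge\widehat F])$ with $F_2\in\Gr(k-r,V/E)$ arbitrary (take $\lambda_2=0$ in the span and close up), so that fiber is positive-dimensional. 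Thus the statement you are trying to prove is false as literally read; only \emph{generic} finiteness holds, which your reconstruction argument via $E=\ker F_{\Omega_V(w)}$ and $[t]=[\Omega_{V/E}(\Omega_V(w))]$ does establish, and which is all that is needed for the dimension count and the defectivity classification. You should delete the properness paragraph and state the conclusion as generic finiteness of the projection.
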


We confirmed this statement for those $r$-restricted chordal varieties composed of $\Gr(2,n)$ for several examples in Macaulay2. We also calculated the dimension for several other known cases. For example, $\sigma_3^{1}(\Gr(4,8))$ which is composed of $\sigma_3^1 (\Gr(3,7))$ has dimension $40$, however its expected dimension is $45$ indicating it is in fact defective. We also performed similar checks of other $r$-restricted chordal varieties composed of a defective secant variety.

\section{Coding Theory}\label{sec:coding}
Let us recall several relevant coding theory definitions from \cite{hankerson2000coding}. 
Let $F$ denote an \defi{alphabet}, which is a set of digits. A sequence of digits from $F$ is called a \defi{codeword}. The \defi{length} of a codeword is the number of digits in the codeword. The collection of codewords, denoted $C$, is called  a \defi{dictionary}. A \defi{code} of length $n$ is a collection of codewords. 
A code is called a \defi{binary code} if $F = \{0,1\}$. A code is transmitted by sending the digits of its codewords in sequence across a channel. The \defi{Hamming distance} between two codewords of equal length  $u,v \in C$, denoted $d(u,v)$, is the number of places that $u$ and $v$ differ. For a codeword $u$, the \defi{weight} of $u$ is defined as, $w(u)=d(u,0)$ where $0$ corresponds to the $0$ digit in the given alphabet. 
Abo-Ottaviani-Peterson gave the following connection to geometry.
\begin{theorem}{\cite{AOP_Grassmann}*{Theorem~4.1} }
Let $A(n, 6, w)$ be the cardinality of the largest binary code of length $n$, constant weight $w$, and Hamming distance between any two codewords at least $6$. 
If $s \leq A(n + 1, 6, k + 1)$ then $\sigma_s(\Gr(k, n))$ has the expected dimension. 
\end{theorem}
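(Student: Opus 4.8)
The plan is to deduce the non-defectivity of $\sigma_s(\Gr(k,n))$ from Terracini's lemma (Lemma~\ref{lemma:Terracini}) applied to a carefully chosen configuration of \emph{coordinate} points of the Grassmannian, and then to recognize that the configurations which work are precisely constant-weight codes of minimum distance $6$. It is convenient to use the projective-space convention of \cite{AOP_Grassmann}, in which $\Gr(k,n)$ is the variety of $k$-planes in $\PP^n$; writing $K=k+1$ and $N=n+1$, this is the Grassmannian of $K$-dimensional subspaces of $\CC^{N}$, its points and its Pl\"ucker coordinates are indexed by $K$-subsets of $[N]$, and $\dim\widehat{T}_E\Gr(k,n)=K(N-K)+1$. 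This accounts for the shift to $A(N,6,K)=A(n+1,6,k+1)$ in the statement.

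First I would record the combinatorial description of the tangent space. As in Section~\ref{sec:notation}, for a $K$-subset $I\subset[N]$ the cone $\widehat{T}_{e_I}\Gr(k,n)$ is the span of the coordinate monomials $e_J$ with $J$ in the ball $B(I):=\{\,J\in\binom{[N]}{K}\mid |I\triangle J|\le 2\,\}$, which has $K(N-K)+1$ elements. Thus $\widehat{T}_{e_I}\Gr(k,n)$ and $\widehat{T}_{e_{I'}}\Gr(k,n)$ have disjoint monomial supports exactly when $B(I)\cap B(I')=\varnothing$. By the triangle inequality for the symmetric-difference metric, together with an explicit ``midpoint'' $K$-set when $|I\triangle I'|=4$, one checks that $B(I)\cap B(I')=\varnothing$ if and only if $|I\triangle I'|\ge 6$. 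Identifying each $K$-subset with its weight-$K$ indicator word, $|I\triangle I'|$ is the Hamming distance of these words, so a family $I_1,\dots,I_s$ with $|I_a\triangle I_b|\ge 6$ for all $a\neq b$ is exactly a binary code of length $N$, constant weight $K$, and minimum distance at least $6$.

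Then I would assemble the argument. Given $s\le A(N,6,K)$, choose such a code $I_1,\dots,I_s$ and set $E_j=[e_{I_j}]$. The balls $B(I_1),\dots,B(I_s)$ are pairwise disjoint subsets of $\binom{[N]}{K}$, so
\[
\widehat{T}_{E_1}\Gr(k,n)+\cdots+\widehat{T}_{E_s}\Gr(k,n)
\]
is a coordinate subspace of dimension exactly $s\bigl(K(N-K)+1\bigr)$, and this number is at most $\binom{N}{K}$; in particular we are in the subgeneric range, where the expected dimension of $\sigma_s(\Gr(k,n))$ equals $s\bigl(K(N-K)+1\bigr)-1$. Since the dimension of the span of $s$ moving tangent spaces can only increase under passage to a more general point (the locus where it is at least a given value is open), the span of the tangent spaces at $s$ general points of $\Gr(k,n)$ has dimension $\ge s\bigl(K(N-K)+1\bigr)$; it cannot exceed this, being a sum of $s$ spaces of dimension $K(N-K)+1$. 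Hence the general span has dimension exactly $s\bigl(K(N-K)+1\bigr)$, and Lemma~\ref{lemma:Terracini} yields $\dim\sigma_s(\Gr(k,n))=s\bigl(K(N-K)+1\bigr)-1$, the expected value.

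The main obstacle --- essentially the only non-formal point --- is the translation step: seeing that the support of a coordinate tangent space is a radius-$1$ ball in the Johnson scheme, verifying that two such balls are disjoint exactly when their centers differ in symmetric difference $\ge 6$ (the midpoint construction being the one unavoidable small case check), and observing that the sphere-packing inequality $s\bigl(K(N-K)+1\bigr)\le\binom{N}{K}$ is automatic from disjointness of the balls, so no separate case in which $\sigma_s$ fills the ambient $\PP\bw{K}\CC^{N}$ is ever needed. With this dictionary in hand, the rest is the routine semicontinuity-plus-Terracini computation above.
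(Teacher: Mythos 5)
The paper offers no proof of this statement; it is quoted verbatim from \cite{AOP_Grassmann}*{Theorem 4.1}, so there is nothing internal to compare against. Your argument is a correct reconstruction of the original Abo--Ottaviani--Peterson proof: the tangent space at a coordinate point $e_I$ is supported on the Hamming ball of radius $1$ about $I$ (exactly the description recalled in Section~\ref{sec:notation}), two such balls in the Johnson scheme are disjoint precisely when $|I\,\triangle\, I'|\ge 6$ (your midpoint check for symmetric difference $4$ is the right verification), and disjointness plus lower semicontinuity of the rank of the span plus Terracini's lemma yields the expected dimension, with the sphere-packing inequality correctly ruling out any need for a separate superabundant case. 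You have also correctly identified that the shift to $A(n+1,6,k+1)$ comes from the projective indexing convention of \cite{AOP_Grassmann}, under which Pl\"ucker coordinates are $(k+1)$-subsets of an $(n+1)$-element set.
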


A \defi{Grassmann code} is a special case of a linear code. Let $\mathbb{F}_q$ be the field with $q$ elements. Then, it is well-known that $\Gr_{\mathbb F _{q}}(k,n)$ contains $P$ points where
\begin{equation}\label{eq:numGr}
P=\frac{(q^n-1)(q^{n-1}-1)\dots(q^{n-k+1}-1)}{{(q^k-1)(q^{k-1}-1)\dots(q-1)}}
\end{equation}

To define the Grassmann code as a linear code first pick a Pl\"ucker representative of each of the $P$ points as a column vector in $(\mathbb F _q)^I$ for $I=\binom{n}{k}$ and form an $I \times P$ matrix $M$ (the generator matrix) with these $P$ vectors as columns. Grassmann codes (in the identifiable case) correspond to sums of $k$-fold wedge products.
Vectors in the Pl\"ucker embedding of $\Gr(k,n)$ are the codewords in a Grassmann code. So a general $x \in \sigma_s(\Gr(k,n))$ can be thought of as an unordered collection of $s$ codewords. The codewords are uniquely recoverable as long as $\Gr(k,n)$ is \defi{identifiable} in rank $s$, which we expect is true for small $s$ \cite{casarotti2022tangential}. 

The \defi{Grassmannian distance} for $A,B \in \Gr(k,n)$ is  $d_G(A,B)=k-dim(A \cap B)$. Note, points of the restricted chordal variety $\sigma_2^r(\Gr(k,n))$ are of the form $[\hat A+\hat B]$, with $d_G(A,B) = k-r$. 

A code corresponding to a point of $\sigma_s^{r}(\Gr(k,n))$ (again assuming identifiability), consists of a collection of $s$ codewords with the restriction that (pairwise) codewords must have distance $k-r$ between them, and that the intersection is the same for all pairs. This leads to a trade-off between redundancy and the capacity of the coding scheme. The restriction limits the number of possible  codewords available, corresponding to an increase in the amount of information necessary to ensure accurate decoding. The max number of codewords in a signal for a given coding scheme can be considered the capacity of the channel, which is, in turn, found by determining the dimension of the variety (i.e. $\dim(\sigma_s^r(\Gr(k,n)))$ and $ \dim(\sigma_s(\Gr(k,n)))$) corresponding to the coding scheme.

Section~\ref{sec:dimr} provides a method  involving the contraction operator to determine whether a given point lies on a restricted chordal variety. The contraction determines the common intersection and the remaining information could be computed separately by tensor decomposition. Therefore, with an appropriate choice of collections of codewords on restricted secants one could build  extra information for decoding as redundancies in each codeword. This redundancy could permit an error-correcting mechanism. 

Theorem \ref{thm:fiber} says the following in terms of the coding theory. Codes for restricted secants of Grassmannians can be thought of as Grassmann codes except that the codewords are padded with an additional overlap. Therefore, \cite{AOP_Grassmann}*{Theorem~4.1} says:  Let $A(n, 6, w)$ be the cardinality of the largest binary code of length $n$, constant weight $w$, and distance $6$. 
If $s \leq A(n + 1, 6, k + 1)$ then $\sigma_s^r(\Gr(k+r, n+r))$  has the expected dimension. 

We end this section with an extended example.
\begin{example}
Consider binary codes in the case of $\Gr(3,\FF_2^6) \subset \PP \bw{3} \FF_2^6$.  
By \eqref{eq:numGr} there are $1,395$ points in $\Gr(3,\FF _{2}^6)$. The corresponding linear code has a $20 \times 1,395$ generator matrix, $M$, whose columns are the Pl\"ucker coordinates of each of the $1,395$ points. Then, one encodes a message $b$ as the product $Mb$.

Special subsets of possible messages come from points of a given orbit (like the secant or restricted secant, or tangent to the Grassmannian). For a variety $X$ ``the orbit'' is the set, denoted $X^\circ$, of points that are equivalent to the normal form on the respective variety up to change of coordinates by $\SL_6(\FF_2)$. We are interested in the numbers of points in each orbit.

For a pair of codewords $x,y \in \Gr(3,6)$, construct the message $b$ consisting of two non-zero entries. This represents a code in $\sigma_2(Gr(3,6))$. Changing the codewords $x,y \in \Gr(3,6)$ so that they share an $r$-dimensional overlap results in a message in $\sigma_2^r(\Gr(3,6)).$ 

Here we can completely describe the $\SL_6(\FF_2)$-orbits in $\bw 3 \FF_2^6$.
To count the number of points in an orbit of a finite matrix group we repeatedly apply random non-singular matrices to the set of known points in the orbit until the number of unique elements in the set stabilizes. This indicates that it is likely that all the points in that orbit have been obtained. 
If the list of orbits obtained this way fills out the entire ambient space we are ensured that no points were missed. On the other hand, if there are missing points one can take the orbit of a point not already on a known orbit, and compute its orbit. The results are listed in Table~\ref{tab:numOrbits}.

\begin{table}[h]
    \centering
\begin{tabular}{|r||c|c|c|c|c|c|}
\hline
$X^\circ$  & $0$ & $\Gr(3,6)^\circ$ & $\sigma_2^1 (\Gr(3,6))^\circ$ & 
$\tau (\Gr(3,6))^\circ$ & $\sigma_2 (\Gr(3,6))^\circ$ & $\Xi^\circ$ \\
\hline 
$\# X^\circ $ & 1 & 1,395 & 54,684 & 468,720 & 357,120 & 166,656\\
\hline
\end{tabular}
    \caption{The orbits of $\bw{3} \FF_2^6$ under the $\SL_6(\FF_2)$-action.}
    \label{tab:numOrbits}
\end{table}

The classical orbit closures are linearly ordered: $\Gr(3,6) \subset \sigma_2^1 \Gr(3,6) \subset \tau(\Gr(3,6)) \subset \sigma_2(\Gr(3,6)) = \PP \bw 3 \FF_2^6 $.
We found precisely one new orbit, with normal form:
\[
\xi = e_1e_2e_4+e_0e_3e_4+e_0e_2e_5+e_0e_3e_5+e_1e_3e_5
 = (e_1e_2+e_0e_3)e_4+(e_0e_2+(e_0+e_1)e_3)e_5
.\]
Taking a limit that sends $e_5 \to 0$ one sees that the closure of $\Xi$ contains $ \sigma_2^1\Gr(3,6)$. Experiments suggest that that $\Xi$ is not contained in $\tau$. 
Indeed, the Grassmann discriminant \cite{HolweckOedingE8}*{Ex.~6.1}, the defining polynomial for the hypersurface $\tau(\Gr(3,6))$, evaluates at $\xi$ to $15 \not \equiv 0 \mod 2$, hence implying non-membership: $\tau$, i.e. $\Xi \not \subset \tau(\Gr(3,6))$. 
\end{example}

We note a bijection between $\sigma_2^1 (\Gr(3,6))^\circ$ and $\Gr(1,6)^\circ \times \sigma_2(\Gr(2,5))^\circ= (\FF_2^6 \setminus {0}) \times  \PP \bw{2} \FF_2^5 \setminus \Gr(2,5)) $, the fiber bundle from Theorem~\ref{thm:fiber}. The number of points of the latter is, using \eqref{eq:numGr}, $(2^6-1) \cdot (2^{\binom{5}{2}} - \frac{(2^5-1)(2^4-1)}{2^2-1})
= 54,684$, which agrees with the exhaustive count. Further, we have a an identifiability over $\FF_2$ for $\sigma_2^1( \Gr(3,6))^\circ$, whose points correspond uniquely to pairs of a non-zero vector in $\FF^6$ and a full rank skew-symmetric $5\times 5$ matrix over $\FF_2$.

\section*{Acknowledgements}
Oeding thanks Roland Abauf, Elisa Postinghel, for initial discussions on this topic. Bidleman thanks Matt Speck and Colby Muir for discussions on the subject.

\newcommand{\arxiv}[1]{\href{http://arxiv.org/abs/#1}{{\tt arXiv:#1}}}
\bibliographystyle{amsplain}
\bibliography{BidlOed_main_bibfile}

\end{document}